\documentclass[12pt]{amsart}
\usepackage{ amsmath, amsthm, amsfonts, amssymb, color}
 \usepackage{mathrsfs}
\usepackage{amsfonts, amsmath}
 \usepackage{amsmath,amstext,amsthm,amssymb,amsxtra}
 \usepackage{txfonts} 
 \usepackage[colorlinks, citecolor=blue,pagebackref,hypertexnames=false]{hyperref}
 \allowdisplaybreaks
 \usepackage{pgf,tikz}
 \textwidth =158mm
 \textheight =225mm
\oddsidemargin=-0.0cm
\evensidemargin=0.0cm
\headheight=13pt
\headsep=0.8cm
\parskip=0pt
\hfuzz=6pt
\widowpenalty=10000
 \setlength{\topmargin}{-0.6cm}

\begin{document}
\baselineskip 16pt

\newcommand\C{{\mathbb C}}
\newtheorem{theorem}{Theorem}[section]
\newtheorem{proposition}[theorem]{Proposition}
\newtheorem{lemma}[theorem]{Lemma}
\newtheorem{corollary}[theorem]{Corollary}
\newtheorem{remark}[theorem]{Remark}
\newtheorem{example}[theorem]{Example}
\newtheorem{question}[theorem]{Question}
\newtheorem{exercise}[theorem]{Exercise}
\newtheorem{definition}[theorem]{Definition}
\newtheorem{conjecture}[theorem]{Conjecture}
\newcommand\RR{\mathbb{R}}
\newcommand{\la}{\lambda}
\def\RN {\mathbb{R}^n}
\newcommand{\norm}[1]{\left\Vert#1\right\Vert}
\newcommand{\abs}[1]{\left\vert#1\right\vert}
\newcommand{\set}[1]{\left\{#1\right\}}
\newcommand{\Real}{\mathbb{R}}
\newcommand{\supp}{\operatorname{supp}}
\newcommand{\card}{\operatorname{card}}
\renewcommand{\L}{\mathcal{L}}
\renewcommand{\P}{\mathcal{P}}
\newcommand{\T}{\mathcal{T}}
\newcommand{\A}{\mathbb{A}}
\newcommand{\K}{\mathcal{K}}
\renewcommand{\S}{\mathcal{S}}
\newcommand{\blue}[1]{\textcolor{blue}{#1}}
\newcommand{\red}[1]{\textcolor{red}{#1}}
\newcommand{\Id}{\operatorname{I}}
\newcommand\wrt{\,{\rm d}}
\def\SH{\sqrt {H}}

\newcommand{\rn}{\mathbb R^n}
\newcommand{\de}{\delta}
\newcommand{\tf}{\tfrac}
\newcommand{\ep}{\epsilon}
\newcommand{\vp}{\varphi}

\newcommand{\mar}[1]{{\marginpar{\sffamily{\scriptsize
        #1}}}}

\newcommand\CC{\mathbb{C}}
\newcommand\NN{\mathbb{N}}
\newcommand\ZZ{\mathbb{Z}}
\renewcommand\Re{\operatorname{Re}}
\renewcommand\Im{\operatorname{Im}}
\newcommand{\mc}{\mathcal}
\newcommand\D{\mathcal{D}}
\newcommand{\al}{\alpha}
\newcommand{\nf}{\infty}
\newcommand{\comment}[1]{\vskip.3cm
	\fbox{%
		\color{red}
		\parbox{0.93\linewidth}{\footnotesize #1}}
	\vskip.3cm}

\newcommand{\disappear}[1]

\numberwithin{equation}{section}
\newcommand{\chg}[1]{{\color{red}{#1}}}
\newcommand{\note}[1]{{\color{green}{#1}}}
\newcommand{\later}[1]{{\color{blue}{#1}}}
\newcommand{\bchi}{ {\chi}}

\numberwithin{equation}{section}
\newcommand\relphantom[1]{\mathrel{\phantom{#1}}}
\newcommand\ve{\varepsilon}  \newcommand\tve{t_{\varepsilon}}
\newcommand\vf{\varphi}      \newcommand\yvf{y_{\varphi}}
\newcommand\bfE{\mathbf{E}}
\newcommand{\ale}{\text{a.e. }}

 \newcommand{\mm}{\mathbf m}
\newcommand{\Be}{\begin{equation}}
\newcommand{\Ee}{\end{equation}}

 \textwidth =162mm
\textheight =228mm
\oddsidemargin=-0.0cm
\evensidemargin=0.0cm
\headheight=13pt
\headsep=0.8cm
\parskip=0pt
\hfuzz=6pt
\widowpenalty=10000
\setlength{\topmargin}{-0.6cm}

\title[The commutators of Bochner-Riesz Operators  ]
{The commutators of Bochner-Riesz Operators for Hermite operator}
\author[Peng Chen, Xixi Lin, and Lixin Yan]{Peng Chen,  Xixi Lin, and Lixin Yan}
\address{Peng Chen, Department of Mathematics, Sun Yat-sen
	University, Guangzhou, 510275, P.R. China}
\email{chenpeng3@mail.sysu.edu.cn}
  \address{Xixi Lin, Department of Mathematics, Sun Yat-sen   University,
	Guangzhou, 510275, P.R. China}
\email{linxx58@mail2.sysu.edu.cn}
\address{Lixin Yan, Department of Mathematics, Sun Yat-sen   University,
	Guangzhou, 510275, P.R. China}
\email{mcsylx@mail.sysu.edu.cn}

\date{\today}
\subjclass[2000]{42B15, 42B20, 47F10.}
\keywords{ Commutators, BMO, Bochner-Riesz means,  Hermite operator, spectral multipliers}

\begin{abstract}
 In this paper, we study  the $L^p$-boundedness of  the commutator
	$
	[b, S_R^\delta(H)](f) = bS_R^\delta(H) f - S_R^\delta(H)(bf)
	$
	of a BMO function  $b$ and the Bochner-Riesz means
	$S_R^\delta(H)$ for Hermite operator $H=-\Delta +|x|^2$ on $\mathbb{R}^n$, $n\geq2$.
	We show that if $\delta>\delta(q)=n(1/q -1/2)- 1/2$, the commutator $[b,S_R^\delta(H)]$ is bounded on $L^p(\mathbb{R}^n)$
	whenever $q<p<q'$  and $1\leq q\leq 2n/ (n+2)$.
\end{abstract}

\maketitle

\section{Introduction}
\setcounter{equation}{0}

We begin with recalling the Bochner-Riesz means on
$\RN$ which  are  defined by,  for $\delta\ge 0$ and $R>0$,
\begin{eqnarray*}
	\widehat{S^{\delta}_Rf}(\xi)
	=\left(1-{|\xi|^2\over R^2}\right)_+^{\delta} \widehat{f}(\xi),  \quad \forall{\xi \in \RN}.
\end{eqnarray*}
Here $(x)_+=\max\{0,x\}$ for $x\in \mathbb R$ and $\widehat{f}\,$ denotes the Fourier  transform
of $f$.  The commutator $[b, S^{\delta}_R]$ is defined by
\begin{eqnarray*}
	[b, S^{\delta}_R](f) = bS^{\delta}_R f - S^{\delta}_R(bf).
\end{eqnarray*}
Let $b\in {\rm BMO}(\RR^n)$. If $\delta\geq(n-1)/2$, the commutator $[b,S^{\delta}_R]$ is bounded on $L^p(\RR^n)$   for all $1<p<\infty$ (\cite[p.129]{LY}); On the space $\RR^2$, if $0<\delta< 1/2$,  Hu and Lu \cite{HL1} and Lu and Xia~\cite{LX} proved that the commutator  $[b,S^{\delta}_R]$ is bounded on $L^p(\RR^2)$   if and only if
$
4/(3+2\delta)<p<4/(1-2\delta).
$
On the space $\RR^n,n\geq3$, if  $0<\delta< (n-1)/2$,  Lu and Xia~\cite{LX} showed that the commutator $[b,S^{\delta}_R]$ is bounded on $L^p(\RR^n)$ with $p>1$,
then
$
\delta>\delta(p)=\max\{n|1/p-1/2|-1/2,0\}.
$
Conversely,   if
$(n-1)/(2n+2)<\delta< (n-1)/2 $
 Hu and Lu \cite{HL1} showed that  the commutator $[b,S^{\delta}_R]$ is bounded on $L^p(\RR^n)$ provided $
\delta>\delta(p)
$.
For other works about the commutator $[b,S_R^{\delta}]$
of Bochner-Riesz operators, see \cite{ABKP, HL1, LX,  LY, SSun, St} and the references therein.

In this paper we  are concerned the  $L^p$-boundedness of commutators of  a BMO function  $b$ and  the    Bochner-Riesz means
for
the Hermite operator $H$  on $ \RR^n$ for $n\geq2$, which is defined by
\begin{eqnarray}\label{e1.1}
	H=-\Delta + |x|^2 =-\sum_{i=1}^n {\partial^2\over \partial x_i^2} + |x|^2, \quad x=(x_1, \cdots, x_n).
\end{eqnarray}
The operator $H$ is non-negative and self-adjoint with respect to the Lebesgue measure
on $\RN$.
Let   $\Phi_{\mu}$ be  eigenfunctions for
the Hermite operator with eigenvalue $(2|\mu|+n)$ and $\{\Phi_{\mu}\}_{\mu\in \mathbb N_0^n}$ form a complete orthonormal
system in $L^2({\RN})$.
Thus, for  every $f\in L^2(\RN)$  we have  the Hermite expansion
\begin{eqnarray} \label{e1.2}
	f(x)=\sum_{\mu}\langle f, \Phi_{\mu}\rangle \Phi_\mu(x)=\sum_{k=0}^{\infty}  \sum_{|\mu|=k}\langle f, \Phi_{\mu}\rangle\Phi_\mu(x)
	=:  \sum_{k=0}^{\infty} P_kf(x).
\end{eqnarray}
For $R>0$, the  Bochner-Riesz means for $H$ of order $\delta\geq 0$   is defined by
\begin{eqnarray}\label{e13}
	S_R^{\delta}(H)f(x):=\Big(I-\frac{H}{R^2}\Big)^{\delta}_+
	:=
	\sum_{k=0}^{\infty} \left(1-{2k+n\over R^2}\right)_+^{\delta} P_k f(x).
\end{eqnarray}
On the space $\mathbb{R}$,
Thangavelu \cite{T1} showed that $S_R^\delta(H)$ is uniformly bounded on $L^p(\mathbb{R})$ for $1\leq p\leq \infty$
provided $\delta>1/6$. If $0<\delta<1/6$,  $S_R^\delta(H)$ is uniformly bounded on $L^p(\mathbb{R})$ if and only if
$4/(6\delta+3) <p<4/(1-6\delta)$. On the space $\mathbb{R}^n$, $n\geq2$, if $\delta> (n-1)/2$, Thangavelu \cite{T2}
showed that $S_R^\delta(H)$ is uniformly bounded on $L^p(\mathbb{R}^n)$ for $1\leq p\leq \infty$. Especially for the case
$p=1$, $\delta>(n-1)/2$ is also the necessary condition for the $L^1(\mathbb{R}^n)$-boundedness of $S_R^\delta(H)$.
When $0\leq\delta\leq (n-1)/2$, it was conjectured (see \cite[p.259]{T3}) that $S_R^\delta(H)$ is bounded on $L^p(\mathbb{R}^n)$ uniformly in $R$ if and only if
$$\delta>\delta(p)=\max\Bigg\{n\left|\frac1p-\frac12\right|-\frac12,0\Bigg\}.$$
Thangavelu  proved the necessary part that if $S_R^\delta(H)$  is $L^p(\mathbb{R}^n)$ uniformly bounded,
then $\delta>\delta(p)$.  Karadzhov \cite{K} proved the sufficiency of the conjecture is true when $\delta$ is in the range of $[1,2n/(n+2)]\cup[2n/(n-2),\infty]$.

Let $b\in {\rm BMO}(\mathbb R^n)$.  The commutator $[b, S_R^\delta(H)]$ is defined by
$
[b, S_R^\delta(H)](f) = bS_R^\delta(H) f - S_R^\delta(H)(bf).
$
The aim of this  paper is to  prove the following result.

\begin{theorem}\label{Thm1}
	Let $H$ be the Hermite operator defined in \eqref{e1.1} on $\RR^n$, $n\geq2$. Let $1\leq p\leq2n/(n+2)$ and $\delta>\delta(p)$, then for all $b\in \mathrm{BMO(\mathbb{R}^n)}$
	$$\sup_{R>0} \big\| \big[b,S_R^{\delta}(H)\big ]  \big\|_{q\rightarrow q}\leq C\|b\|_{\mathrm{BMO}}$$
	for all $p<q<p'$.
\end{theorem}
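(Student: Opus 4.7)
The plan is to dyadically decompose the Bochner-Riesz multiplier in the spectral parameter, reduce the commutator bound to weighted $L^q$ estimates for each dyadic piece, and then sum. Choose $\psi\in C_c^\infty((1/2,2))$ with $\sum_{j\geq 0}\psi(2^j u)\equiv 1$ on $(0,1]$ and set
\[
m_j(\lambda):=\bigl(2^j(1-\lambda)\bigr)^{\delta}\psi\bigl(2^j(1-\lambda)\bigr),
\]
so that each $m_j$ is a smooth bump supported in $\{\lambda:1-\lambda\sim 2^{-j}\}$ with $C^N$-norms bounded uniformly in $j$ for every fixed $N$. Then for $\lambda\in[0,1]$,
\[
(1-\lambda)_+^{\delta}=\sum_{j\geq 0} 2^{-j\delta}\,m_j(\lambda),\qquad S_R^{\delta}(H)=\sum_{j\geq 0}2^{-j\delta}\,m_j(H/R^2),
\]
so by linearity $[b,S_R^{\delta}(H)]=\sum_{j\geq 0}2^{-j\delta}[b,m_j(H/R^2)]$, and it suffices to control each commutator $[b,m_j(H/R^2)]$ with polynomial loss in $2^j$ strictly smaller than $2^{j\delta}$.

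For each piece I would prove a weighted estimate of the form
\[
\|m_j(H/R^2)f\|_{L^q(w)}\leq C\,2^{j\alpha}\,[w]_{A_{q/p}}^{\beta}\,\|f\|_{L^q(w)}
\]
for every $w\in A_{q/p}$, $p<q<p'$, and some fixed exponents $\alpha=\alpha(n,p)<\delta$, $\beta=\beta(n,p)>0$. The ingredients are (i) Mehler's formula, giving Gaussian heat-kernel bounds for $e^{-tH}$ and, via the transplantation $\cos(t\sqrt H)$, finite propagation at speed one; (ii) Karadzhov's endpoint $L^p$-boundedness of $S_R^\delta(H)$ for $1\leq p\leq 2n/(n+2)$ and $\delta>\delta(p)$; and (iii) a Hörmander–type spectral multiplier principle (in the spirit of Duong–Ouhabaz–Sikora) that upgrades a smooth bump of spectral scale $2^{-j}$ with an $L^p\to L^p$ endpoint bound into a weighted $L^q(w)$ estimate with polynomial loss in $2^j$.

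With the weighted estimate in hand, the Alvarez–Bagby–Kurtz–P\'erez commutator principle yields
\[
\|[b,m_j(H/R^2)]\|_{L^q\to L^q}\leq C\,2^{j\alpha}\|b\|_{\mathrm{BMO}}.
\]
Summing against the $2^{-j\delta}$ factors gives
\[
\sup_{R>0}\|[b,S_R^{\delta}(H)]\|_{L^q\to L^q}\lesssim \|b\|_{\mathrm{BMO}}\sum_{j\geq 0}2^{j(\alpha-\delta)},
\]
which is finite provided $\alpha<\delta$, and this margin is exactly what the hypothesis $\delta>\delta(p)$ supplies (taking $q$ in the open interval $(p,p')$).

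The main obstacle is Step 2: establishing, for each single dyadic Hermite Bochner-Riesz piece, a weighted $L^q$ estimate with loss in $2^j$ strictly better than $2^{j\delta(p)}$ and with polynomial dependence on $[w]_{A_{q/p}}$ (as required for the Alvarez–Bagby–Kurtz–P\'erez transfer). In the Euclidean setting, stationary-phase arguments and the restriction theorem produce such bounds directly; for the Hermite operator, the lack of dilation invariance forces one to combine Mehler's formula, finite propagation speed, and the delicate eigenfunction/multiplier estimates of Thangavelu and Karadzhov. Arranging the bookkeeping so that the $A_{q/p}$-dependence is quantitative and so that the $j$-loss is strictly sub-$2^{j\delta}$ is the technical heart of the argument.
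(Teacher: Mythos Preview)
Your reduction via the \'Alvarez--Bagby--Kurtz--P\'erez transfer is a legitimate blueprint, but Step~2 is not a ``bookkeeping'' matter: it is the whole theorem, and in fact a stronger one. You need, for each dyadic piece $m_j(H/R^2)$ (a bump of spectral width $\sim 2^{-j}R^2$ near the top of the spectrum), a bound on $L^q(w)$ for all $w\in A_{q/p}$ with loss only $2^{j\alpha}$, $\alpha<\delta(p)+\varepsilon$, and polynomial $[w]_{A_{q/p}}$--dependence. No such weighted estimate is available for the Hermite operator; the Duong--Ouhabaz--Sikora machinery you cite produces unweighted $L^p$ multiplier theorems from Gaussian bounds and Plancherel/restriction input, not quantitative $A_p$ bounds. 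Even in the Euclidean case the sharp weighted theory for dyadic Bochner--Riesz pieces is incomplete in this range, and the commutator results of Hu--Lu and Lu--Xia do not proceed through full $A_p$ theory either. So your proposal trades the target theorem for a harder, open one.

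The paper avoids weighted estimates entirely. The key device (Lemma~3.1, from \cite{CTW}) is a direct commutator bound: if $T$ is bounded $L^p\to L^s$ and its kernel is supported in $\{|x-y|\le\rho\}$, then $\|[b,T]\|_{q\to q}\lesssim \|b\|_{\mathrm{BMO}}\,\rho^{n(1/p-1/s)}\|T\|_{p\to s}$ for $p<q<s$. This replaces the ABKP step and needs only an off-diagonal $L^p\to L^s$ bound, not $A_p$ control. To feed it, the paper decomposes $F(t\sqrt H)$ not in $1-\lambda$ but via a dyadic partition of the \emph{Fourier side} of $F$ together with a mollification $\xi_N\ast\delta_tF$ at scale $N\sim t^{-1}$; finite propagation speed then gives the required kernel support for each piece. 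The $L^p\to L^2$ norms of the pieces are obtained from Karadzhov's discrete restriction estimate $\|P_k\|_{p\to2}\lesssim k^{(\delta(p)-1/2)/2}$, together with the a~priori smoothing bound $\|(I+H)^{-n(1/r-1/2)/2-\varepsilon}\|_{2\to r}\le C$ (Lemma~2.1), which compensates for the fact that only the discrete restriction condition~\eqref{SC}, and not the continuous spectral-measure estimate~\eqref{e1.4}, holds for $H$. The point is that Lemma~3.1 plus finite propagation lets one bypass weighted theory altogether; your Step~2 would, if true, give more, but it is not a known ingredient.
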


We would like to mention that in  \cite{CTW},    Tian,  Ward and the first named author obtained
the $L^q(\mathbb{R}^n)$-boundedness of  the commutator  $[b, S_R^\delta(L)]$
of a BMO function $b$ and the Bochner-Riesz means
$S_R^\delta(L)$ for a class  of elliptic self-adjoint operators $L$   on $\mathbb{R}^n$,
where $L$ satisfies  the finite speed of propagation property  for the wave equation (see section 2 below)
and spectral measure estimate: For some $1\leq p<2,$
\begin{equation} \label{e1.4}
	\|dE_{\sqrt L}(\lambda)\|_{p\to  p'  }\le C\ \lambda^{{n}(\frac{1}{p}-\frac{1}{p'})-1}, \ \ \lambda>0.
\end{equation}
However,  such estimate \eqref{e1.4} is not available for the Hermite operator $H=-\Delta +|x|^2 $ on ${\mathbb R^n},$ and    an argument   in \cite{CTW} does not work    Theorem~\ref{Thm1} very well.
 The proof of   Theorem~\ref{Thm1} will be given  in Section  3 by using   three non-trivial facts.
The first is the restriction-type estimate  due to
Karadzhov \cite{K}:    for $1\leq p\leq  2n/(n+2)$ and $n\geq2$,
\begin{align}\label{SC}
	\|\chi_{[k,k+1)}(H)\|_{p\rightarrow2}=\|P_k\|_{p\rightarrow2} \leq Ck^{\frac{n}2(\frac1p-\frac12)-\frac12},\ \forall k\in\mathbb{N}.
\end{align}
The discrete restriction-type
condition~\eqref{SC} is weaker than the restriction-type condition~\eqref{e1.4}. To compensate for this
difference, when proving  the $L^p$-boundedness for the commutator
$\big[b, 	S_R^{\delta}(H)\big ]$ in our  Theorem~\ref{Thm1}, we need
\emph{a priori} estimate that for any $\varepsilon>0$ and $1\leq r\leq 2$,
\begin{align}\label{Cruc}
	\|(I+H)^{-\frac{n}{2}(\frac1r-\frac12)-\varepsilon}\|_{2\rightarrow r}\leq C_\varepsilon.
\end{align}
This is an important observation
in our paper. Further,  the fact that the
eigenvalue of the Hermite operator is bigger  than $1$ or equal $1$,
 is useful in the proof of Theorem~\ref{Thm1}.

\medskip

\section{ Preliminaries }
\setcounter{equation}{0}

  Throughout this paper,
 for $1\leq p\leq\infty$, we write the $L^p$ norm of a function $f$ by $\|f\|_p$ and write $\|T\|_{p\to q} $ for the operator norm of $T$ if $T$ is a bounded linear operator from $L^p(\mathbb{R}^n)$ to $L^q(\mathbb{R}^n)$. Given a
 subset $E\subseteq X$, we  denote the characteristic function of $E$ by $\chi_E$.
For a given  function $F: {\mathbb R}\to {\mathbb C}$ and $R>0$, we define the
function
$\delta_RF:  {\mathbb R}\to {\mathbb C}$ by putting
 $\delta_RF(x)= F(Rx).$

\begin{lemma}\label{Lem2}
	Let $H$ be the Hermite operator defined in \eqref{e1.1}. For any $\varepsilon>0$ and $1\leq r\leq2$, there exists a constant $C=C_{r,\,\varepsilon}$ such that
  \begin{align}\label{lemma2.1}
    \|(I+H)^{-\frac{n}2(\frac1r-\frac12)-\varepsilon}\|_{2\rightarrow r}\leq C.
  \end{align}
\end{lemma}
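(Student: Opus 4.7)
The plan is to deduce the $L^2\to L^r$ bound from a weighted $L^2$ bound, combined with H\"older's inequality. Since $-\Delta\geq 0$ on $L^2(\mathbb{R}^n)$, one has the operator inequality $1+|x|^2\leq I+H$, and the L\"owner--Heinz inequality yields $(1+|x|^2)^{2s}\leq(I+H)^{2s}$ for $0\leq 2s\leq 1$; hence
\begin{equation*}
\bigl\|(1+|x|^2)^{s}(I+H)^{-s}\bigr\|_{L^2\to L^2}\leq 1\qquad\text{for } 0\leq s\leq 1/2. \tag{W}
\end{equation*}
The first task is to extend (W) (with a possibly larger constant $C_s$) to all $s\geq 0$. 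For integer $s=m$, expand $(1+|x|^2)^m$ and bound each $\||x|^{2j}u\|_2$ by $\|(I+H)^j u\|_2$; the base case $\||x|^2 u\|_2\leq C\|(I+H)u\|_2$ follows from the identity
\[
\|Hu\|_2^2=\|\Delta u\|_2^2+2\int|x|^2|\nabla u|^2-2n\|u\|_2^2+\||x|^2u\|_2^2,
\]
obtained by expanding $\|(-\Delta+|x|^2)u\|_2^2$ and integrating by parts, while the induction step uses the commutator $[-\Delta,|x|^{2j}]$, which is of lower order. For non-integer $s$, Stein's complex interpolation on the analytic family $T_z=(1+|x|^2)^z(I+H)^{-z}$ closes the argument: $\|T_{it}\|_{L^2\to L^2}=1$ as a composition of unitaries, and $\|T_{m+it}\|_{L^2\to L^2}\leq C_m$ because $T_{m+it}=(1+|x|^2)^{it}\,T_m\,(I+H)^{-it}$ sandwiches $T_m$ between unitaries.

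For $1\leq r<2$, choose $s$ with $\tfrac{n}{2}\bigl(\tfrac{1}{r}-\tfrac{1}{2}\bigr)<s\leq\alpha$, where $\alpha=\tfrac{n}{2}(\tfrac{1}{r}-\tfrac{1}{2})+\varepsilon$; such $s$ exists by hypothesis. By H\"older's inequality with conjugate exponents $2/r$ and $2/(2-r)$,
\begin{equation*}
\bigl\|(I+H)^{-\alpha}f\bigr\|_r\leq\bigl\|(1+|x|^2)^{-s}\bigr\|_{L^{2r/(2-r)}}\cdot\bigl\|(1+|x|^2)^{s}(I+H)^{-\alpha}f\bigr\|_2.
\end{equation*}
The first factor is finite precisely because $s>\tfrac{n}{2}(\tfrac{1}{r}-\tfrac{1}{2})$, so $\int(1+|x|^2)^{-2rs/(2-r)}\,dx<\infty$. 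For the second, decompose $(I+H)^{-\alpha}=(I+H)^{-s}(I+H)^{-(\alpha-s)}$ and apply (W) together with the contraction $\|(I+H)^{-(\alpha-s)}\|_{L^2\to L^2}\leq 1$ (valid since $\alpha\geq s\geq 0$ and $I+H\geq I$), yielding the bound $C_s\|f\|_2$. The case $r=2$ is immediate, as $(I+H)^{-\varepsilon}$ is a contraction on $L^2$.

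The main technical obstacle lies in establishing (W) for $s>1/2$, where L\"owner--Heinz alone fails since $t\mapsto t^s$ is not operator-monotone for $s>1$; the commutator identity combined with Stein's complex interpolation is what carries the argument through to arbitrary $s$, after which the H\"older step is a routine matching of exponents.
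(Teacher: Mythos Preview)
Your argument is correct, and it is genuinely different from what the paper does. The paper gives no self-contained proof at all: it simply cites \cite[Lemma~7.9]{DOS} for $r=1$ and \cite[(6.5)]{CLSY} for general $r$. Those references proceed via the heat semigroup, writing $(I+H)^{-\alpha}=\Gamma(\alpha)^{-1}\int_0^\infty t^{\alpha-1}e^{-t}e^{-tH}\,dt$ and invoking the Gaussian upper bound on the heat kernel to control $\|e^{-tH}\|_{2\to r}$; this route works for any operator whose heat kernel obeys Gaussian bounds, and in particular for the more general potentials of Section~4. Your route instead exploits the specific algebraic structure of the harmonic oscillator: the operator inequality $1+|x|^2\le I+H$ combined with commutator estimates yields the weighted $L^2$ bound (W), after which H\"older converts it to the desired $L^2\to L^r$ estimate. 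Your proof is more elementary in that it avoids heat-kernel machinery, but it is less portable, since the weighted bound (W) relies on $V(x)=|x|^2$ (or at least on the polynomial commutator structure), whereas the semigroup argument transfers verbatim to the setting of Theorem~\ref{th4.1}.

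One small remark on rigor: your induction step for integer $m$ (``the commutator $[-\Delta,|x|^{2j}]$ is of lower order'') is sketched rather than proved; the commutator produces terms like $|x|^{2j-2}x\cdot\nabla$, and closing the induction requires also knowing that $x_k(I+H)^{-1/2}$ and $\partial_k(I+H)^{-1/2}$ are $L^2$-bounded. These follow easily from $\|\nabla u\|_2^2+\||x|u\|_2^2=\langle Hu,u\rangle$, so the gap is cosmetic. An alternative clean way to handle all integer $m$ at once is to express $x_k=\tfrac12(a_k+a_k^*)$ in terms of creation/annihilation operators and use $\|a_k u\|_2^2,\|a_k^* u\|_2^2\le\langle(H+n)u,u\rangle$; then $(1+|x|^2)^m$ is a noncommutative polynomial of degree $2m$ in the $a_k,a_k^*$, each monomial of which is controlled by $(I+H)^m$.
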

\begin{proof}
 The proof of \eqref{lemma2.1}   is proved in \cite[Lemma 7.9 ]{DOS} in the case   $r=1$.
 For general $ 1\leq r \leq 2$,
 we refer to \cite[p.3823 (6.5)]{CLSY}.
\end{proof}

\begin{lemma}\label{le2.2}
	Let $H$ be the Hermite operator defined in \eqref{e1.1} on $\RR^n$, $n\geq2$.  Then  for $1\leq p\leq  2n/(n+2)$,
\begin{align}\label{SC1}
	\|\chi_{[k,k+1)}(H)\|_{p\rightarrow2}=\|P_k\|_{p\rightarrow2} \leq Ck^{\frac{n}2(\frac1p-\frac12)-\frac12}
=Ck^{(\delta(p)-1/2)/2},\ \  \ \forall k\in\mathbb{N}.
\end{align}
\end{lemma}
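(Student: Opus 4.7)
The plan is to invoke the discrete restriction-type estimate for the Hermite operator due to Karadzhov \cite{K}, which is precisely the inequality \eqref{SC} already recorded in the introduction. Two bookkeeping items come first. The identity $\|\chi_{[k,k+1)}(H)\|_{p\to2}=\|P_k\|_{p\to2}$ is immediate from the spectral decomposition: the eigenvalues of $H$ form the set $\{2j+n:j\in\mathbb{N}_0\}$, spaced by $2$, so each half-open unit interval $[k,k+1)$ contains at most one eigenvalue, and consequently $\chi_{[k,k+1)}(H)$ coincides either with some $P_j$ or with the zero operator. The second equality in the stated bound is simply the arithmetic check $(\delta(p)-1/2)/2=\tfrac{n}{2}(\tfrac{1}{p}-\tfrac{1}{2})-\tfrac{1}{2}$, which follows from $\delta(p)=n(1/p-1/2)-1/2$.

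Once those identities are in place, the substantive inequality $\|P_k\|_{p\to2}\leq Ck^{\frac{n}{2}(1/p-1/2)-1/2}$ in the range $1\leq p\leq 2n/(n+2)$ is exactly Karadzhov's theorem, and the proof reduces to a direct citation of \cite{K}.

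If one wished to reconstruct the argument from first principles, the natural route is a $TT^*$ reduction together with kernel asymptotics. Since $P_k$ is self-adjoint and idempotent, the standard duality identity $\|P_k\|_{p\to2}^2=\|P_k\|_{p\to p'}$ holds, so it suffices to bound the integral operator with kernel $K_k(x,y)=\sum_{|\mu|=k}\Phi_\mu(x)\Phi_\mu(y)$ from $L^p$ to $L^{p'}$. Summing via Mehler's formula or a Christoffel--Darboux type identity rewrites $K_k$ in terms of Laguerre polynomials, after which Plancherel--Rotach asymptotics produce the sharp endpoint bound $\|P_k\|_{2n/(n+2)\to 2n/(n-2)}\leq C$; interpolation with the trivial bound $\|P_k\|_{2\to2}=1$ then covers the entire range $1\leq p\leq 2n/(n+2)$.

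The main analytical difficulty in such a reconstruction would be controlling the Hermite functions across the classical turning region $|x|\asymp\sqrt{2k+n}$, where their behaviour switches from oscillatory to exponentially decaying and the sharp $k$-dependence must be tracked through the transition. Since Lemma \ref{le2.2} is used only as a black box in the proof of Theorem \ref{Thm1}, all of this analysis can be avoided by quoting \cite{K} directly.
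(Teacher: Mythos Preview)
Your proposal is correct and matches the paper's approach exactly: the paper's proof of Lemma~\ref{le2.2} consists solely of a reference to Karadzhov~\cite{K} (with supplementary pointers to \cite{CLSY, KoT, T3}). Your additional bookkeeping remarks---explaining the spectral identity $\chi_{[k,k+1)}(H)=P_k$ via the $2$-spacing of eigenvalues, checking the exponent arithmetic, and sketching the $TT^\ast$/Mehler route---go beyond what the paper records but are consistent with it and helpful as context.
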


\begin{proof}
	For the proof of this lemma, we refer the reader to \cite{K}. See also \cite{CLSY, KoT, T3}.
	\end{proof}

Define
$$\mathfrak{D}_t:=\{(x,y)\in\mathbb{R}^n\times\mathbb{R}^n:|x-y|\leq t\}.$$
Given an operator $T$ from $L^p$ to $L^q$, we denote
$$\mathrm{supp}K_T\subseteq \mathfrak{D}_t,$$
if $\langle Tf_1,f_2 \rangle=0$ whenever $f_i$ has support $B(x_i,t_i),i=1,2$ and $t_1+t_2+t<|x_1-x_2|$. If $T$ is an integral operator,
 then $\mathrm{supp}K_T\subseteq \mathfrak{D}_t$ coincides with $K_T(x,y)=0,\forall(x,y)\notin\mathfrak{D}_t$. We say that an operator $L$
  satisfies {\it finite speed propagation property}, it means that
\begin{align}\label{FS}
	\mathrm{supp}K_{\cos(t\sqrt{L})}\subseteq \mathfrak{D}_t,\ \forall t>0.
	\tag{FS}
\end{align}
\begin{lemma}\label{le2.3}
Let 	$F$ be  an even bounded Borel function and $\widehat{F}\in L^1(\mathbb{R})$ with supp$\widehat{F}\subseteq [-t,t]$.
	Then we have that $K_{F(\sqrt{H})}\subseteq \mathfrak{D}_t$.
\end{lemma}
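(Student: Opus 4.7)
The plan is to combine the spectral theorem and Fourier inversion with the finite speed of propagation property (FS) applied to $H$ itself. First I would check that $H=-\Delta+|x|^2$ satisfies (FS): since the principal symbol of $H$ agrees with that of $-\Delta$, a standard energy estimate for the wave equation $\partial_t^2 u+Hu=0$ shows that $\mathrm{supp}\,K_{\cos(t\sqrt H)}\subseteq \mathfrak{D}_{|t|}$ for all $t\in\mathbb{R}$. (This is the usual domain-of-dependence argument; it works for any self-adjoint operator of the form $-\Delta+V$ with $V\ge 0$ locally bounded.)

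Next, since $F$ is even and $\widehat F\in L^1(\mathbb R)$, Fourier inversion gives, for every $\lambda\ge 0$,
\[
F(\lambda)=\frac{1}{2\pi}\int_{\mathbb R}\widehat F(\xi)\,e^{i\xi\lambda}\,d\xi=\frac{1}{2\pi}\int_{\mathbb R}\widehat F(\xi)\cos(\xi\lambda)\,d\xi,
\]
where the second equality uses that $F$ is even (so the odd part of the integrand integrates to zero). By the spectral theorem for the self-adjoint operator $\sqrt H$, together with Fubini (justified by $\widehat F\in L^1$ and $\|\cos(\xi\sqrt H)\|_{2\to 2}\le 1$), one has the operator identity
\[
F(\sqrt H)=\frac{1}{2\pi}\int_{-t}^{t}\widehat F(\xi)\,\cos(\xi\sqrt H)\,d\xi,
\]
where the integration range has been restricted using $\mathrm{supp}\,\widehat F\subseteq[-t,t]$.

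Finally, I would verify the defining condition of $\mathrm{supp}\,K_{F(\sqrt H)}\subseteq\mathfrak{D}_t$. Let $f_i\in L^2(\mathbb R^n)$ with $\mathrm{supp}\,f_i\subseteq B(x_i,t_i)$ for $i=1,2$ and $t_1+t_2+t<|x_1-x_2|$. Pairing the formula above with $f_1,f_2$,
\[
\langle F(\sqrt H)f_1,f_2\rangle=\frac{1}{2\pi}\int_{-t}^{t}\widehat F(\xi)\,\langle\cos(\xi\sqrt H)f_1,f_2\rangle\,d\xi.
\]
For each $\xi\in[-t,t]$ we have $t_1+t_2+|\xi|\le t_1+t_2+t<|x_1-x_2|$, and (FS) gives $\mathrm{supp}\,K_{\cos(\xi\sqrt H)}\subseteq\mathfrak{D}_{|\xi|}$, so $\langle\cos(\xi\sqrt H)f_1,f_2\rangle=0$. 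The integrand vanishes identically, hence $\langle F(\sqrt H)f_1,f_2\rangle=0$, as required.

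The main obstacle is really just the first step, establishing (FS) for $H$; once (FS) is in hand the argument is a routine Fourier-inversion wrapping. Since (FS) for Schrödinger operators with non-negative, locally bounded potentials is well documented (and can be cited from the literature on spectral multipliers, e.g.\ Sikora's work), I would simply invoke it rather than reproducing the energy estimate.
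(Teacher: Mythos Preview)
Your proposal is correct and follows essentially the same route as the paper: establish (FS) for $H$, then use Fourier inversion to write $F(\sqrt H)$ as an average of $\cos(\xi\sqrt H)$ over $\xi\in[-t,t]$ and transfer the support condition. The only cosmetic difference is that the paper deduces (FS) from the Gaussian heat-kernel bound for $e^{-tH}$ via Sikora's theorem rather than from an energy estimate, and cites a lemma in \cite{COSY} for the final wrapping step that you carry out by hand.
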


\begin{proof}
Since the heat kernel $p_t(x,y)$ of $e^{-tH}$ satisfies Gaussian upper bound, i.e.,
$$p_t(x,y)\leq (4\pi t)^{-n/2}\exp\left(-\frac{|x-y|^2}{4t}\right),$$
it follows from  (see for example, \cite[Theorem 2]{S}) that $H$ satisfies finite speed propagation property \eqref{FS}.
By Fourier inversion, for any  even function  $F$,
$$F(\sqrt{H})=\frac{1}{2\pi}\int_{-\infty}^{+\infty}\widehat{F}(t)\cos(t\sqrt{H})dt.$$
It's known from \cite[Lemma I.1]{COSY} with supp$\widehat{F}\subseteq [-t,t]$,
we have that $
K_{F(\sqrt{H})}\subseteq \mathfrak{D}_t.
$
\end{proof}

\medskip

\section{Proof of Theorem~\ref{Thm1}}
\setcounter{equation}{0}

We start with the following lemma.

\begin{lemma}\label{le3.1}
	Suppose that $T$ is a linear map and $T$ is bounded from $L^p$ to $L^s$ for some $1\leq p<s<\infty$. Assume also that
	$$\mathrm{supp}~K_T\subseteq \mathfrak{D}_{\rho}$$
	for some $\rho>0$. Assume that function $b\in\mathrm{BMO}$. Then given a number $q$ with $1\leq p<q<s$,
	there exists a constant $C=C_{p,q,s}$ such that
	$$\|[b,T]\|_{q\rightarrow q}\leq C\|b\|_{\mathrm{BMO}}\rho^{n(\frac1p-\frac1s)}\|T\|_{p\rightarrow s}.$$
\end{lemma}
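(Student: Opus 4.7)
The plan is to localize using the support condition, reduce the commutator to a sum over a bounded-overlap covering, and on each piece apply H\"older's inequality together with John--Nirenberg. Fix a covering $\{B_j = B(x_j,\rho)\}_j$ of $\mathbb{R}^n$ by balls of radius $\rho$ with uniformly bounded overlap, and set $B_j^* = B(x_j, 3\rho)$. Unwinding the pairing condition $\mathrm{supp}\,K_T \subseteq \mathfrak{D}_\rho$ (by testing against bump functions of arbitrarily small support near an arbitrary $x\in B_j$) yields the pointwise localization
$$ Tf(x) = T(f\chi_{B_j^*})(x) \qquad \text{for all } x\in B_j, $$
and the same for $T(bf)$. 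Consequently, on $B_j$, constants cancel in the commutator and we may write
$$ [b,T]f(x) = (b(x) - b_{B_j^*})\,T(f\chi_{B_j^*})(x) \; -\; T\bigl((b-b_{B_j^*})f\chi_{B_j^*}\bigr)(x), $$
where $b_{B_j^*}$ is the average of $b$ over $B_j^*$.

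For the first term, I introduce $r$ with $1/q = 1/s + 1/r$ (well-defined since $q<s$) and apply H\"older on $B_j$, John--Nirenberg (giving $\|b-b_{B_j^*}\|_{L^r(B_j)} \leq C\|b\|_{\mathrm{BMO}}\rho^{n/r}$), the assumed $L^p\to L^s$ bound on $T$, and finally one more H\"older step $\|f\chi_{B_j^*}\|_{L^p} \leq C\rho^{n(1/p-1/q)}\|f\|_{L^q(B_j^*)}$, which is legitimate since $p<q$. The exponents collapse via $1/r + (1/p-1/q) = 1/p - 1/s$, producing the bound
$$ \|(b-b_{B_j^*})T(f\chi_{B_j^*})\|_{L^q(B_j)} \leq C\|b\|_{\mathrm{BMO}}\,\rho^{n(1/p-1/s)}\,\|T\|_{p\to s}\,\|f\|_{L^q(B_j^*)}. $$
For the second term, I first use H\"older on $B_j$ to pass from $L^q$ to $L^s$ at cost $\rho^{n(1/q-1/s)}$, then $T:L^p\to L^s$, then H\"older with $1/p = 1/q + 1/u$ together with John--Nirenberg on $\|b-b_{B_j^*}\|_{L^u(B_j^*)} \leq C\|b\|_{\mathrm{BMO}}\rho^{n/u}$; summing exponents again yields the same bound.

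To conclude, I raise to the $q$-th power, sum over $j$, and use that the enlarged balls $\{B_j^*\}$ still have uniformly bounded overlap, so $\sum_j \|f\|_{L^q(B_j^*)}^q \lesssim \|f\|_q^q$. This gives the claimed estimate $\|[b,T]\|_{q\to q}\leq C\|b\|_{\mathrm{BMO}}\rho^{n(1/p-1/s)}\|T\|_{p\to s}$. The main point requiring care is the first step: the hypothesis $\mathrm{supp}\,K_T\subseteq\mathfrak{D}_\rho$ is stated only in a dual pairing form, and one must verify that it does imply the pointwise equality $Tf = T(f\chi_{B_j^*})$ on $B_j$ (or rather the corresponding identity almost everywhere), so that the decomposition of the commutator on $B_j$ is valid. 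Once this localization is in place, the remainder is a routine balancing of H\"older exponents.
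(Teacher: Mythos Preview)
Your argument is correct: the localization via the support condition, the subtraction of the mean $b_{B_j^*}$ to split the commutator, the two H\"older/John--Nirenberg estimates, and the bounded-overlap summation all go through exactly as you describe, and the exponent bookkeeping is right. The paper itself does not prove this lemma but simply cites \cite[Lemma~3.2]{CTW}; your write-up is precisely the standard argument one finds there, so there is nothing to compare. Your caveat about deducing the pointwise localization $Tf = T(f\chi_{B_j^*})$ on $B_j$ from the pairing form of the support hypothesis is well placed but easily handled: decompose $f\chi_{\mathbb{R}^n\setminus B_j^*}$ via a partition of unity into pieces supported in balls disjoint from $B_j^*$ (so that the pairing condition applies to each piece against any test function supported in $B_j$), and use the $L^p\to L^s$ continuity of $T$ to pass to the limit.
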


\begin{proof}
For the proof, we refer the reader to \cite[Lemma 3.2]{CTW}.	
\end{proof}

 To prove Theorem~\ref{Thm1},  we will show the following result, which gives Theorem~\ref{Thm1}
 as a special case with $F(\lambda)=(1-\lambda^2)^{\delta}_+$ and $t=R^{-1}$.

\begin{theorem}\label{Thm2}
  Let $H$ be the Hermite operator defined in \eqref{e1.1} on $\RR^n$, $n\geq2$, $1\leq p\leq 2n/(n+2)$ and $s>n(1/p-1/2)$. Suppose $b\in \mathrm{BMO(\mathbb{R}^n)}$. Then for any even Borel function
  such that supp$F\subseteq[-1,1]$ and $F\in W^{2}_{s}(\mathbb{R})$,  the commutator $[b,F(t\sqrt{H})]$
   is bounded on $L^q(\mathbb{R}^n)$ for all $p<q<p'$ uniformly in $t$. In addition,
 \begin{eqnarray}\label{ee3.1}
 	\sup_{t>0}\left\|[b,F(t\sqrt{H})]\right\|_{q\rightarrow q}\leq C\|b\|_{\mathrm{BMO}}\|F\|_{W^{2}_{s}}.
 \end{eqnarray}
\end{theorem}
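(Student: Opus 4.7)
My plan is to prove Theorem~\ref{Thm2} by a dyadic Fourier decomposition of $F$ combined with Lemma~\ref{le2.3} (finite speed of propagation), Lemma~\ref{le3.1} (support-and-size commutator bound), Lemma~\ref{le2.2} (restriction-type estimate), and Lemma~\ref{Lem2} (Sobolev-type bound on $(I+H)^{-\alpha}$). Since $F$ is real and even, $F(t\sqrt H)$ is self-adjoint and $[b,F(t\sqrt H)]$ is skew-adjoint for real $b$, so duality reduces matters to $p<q<2$; the case $2<q<p'$ follows by taking adjoints, and $q=2$ by interpolation.

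Fix a dyadic partition of unity $\{\phi_\ell\}_{\ell\ge 0}$ on $\mathbb R$ with $\phi_\ell$ supported in $\{|\xi|\sim 2^\ell\}$ for $\ell\ge 1$, and set $F_\ell:=(\phi_\ell\widehat F)^\vee$. Then $F=\sum_\ell F_\ell$, Plancherel gives $\|F_\ell\|_{L^2(\mathbb R)}\le C\,2^{-\ell s}\|F\|_{W^2_s}$, and $\widehat{F_\ell(t\,\cdot)}(s)=t^{-1}\widehat{F_\ell}(s/t)$ is supported in $\{|s|\le C\,2^\ell t\}$. By Lemma~\ref{le2.3}, the kernel of $F_\ell(t\sqrt H)$ lies in $\mathfrak D_{C\,2^\ell t}$. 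Applying Lemma~\ref{le3.1} with $p_1=p$ and $s_1=2$ (permissible because $p<q<2$),
\[
\|[b,F_\ell(t\sqrt H)]\|_{q\to q}\le C\|b\|_{\mathrm{BMO}}(2^\ell t)^{n(1/p-1/2)}\,\|F_\ell(t\sqrt H)\|_{p\to 2}.
\]
Thus Theorem~\ref{Thm2} reduces to the key estimate $\|F_\ell(t\sqrt H)\|_{p\to 2}\le C\,t^{-n(1/p-1/2)}\,2^{-\ell s}\,\|F\|_{W^2_s}$, because substitution produces a geometric series $\sum_\ell 2^{\ell[n(1/p-1/2)-s]}$, summable precisely under the hypothesis $s>n(1/p-1/2)$.

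The main obstacle is this $L^p\to L^2$ bound. By orthogonality and Lemma~\ref{le2.2},
\[
\|F_\ell(t\sqrt H)g\|_2^2=\sum_k|F_\ell(\lambda_k)|^2\,\|P_k g\|_2^2 \le C\|g\|_p^2\sum_{k\le C/t^2}|F_\ell(\lambda_k)|^2\,k^{n(1/p-1/2)-1},
\]
where $\lambda_k=t\sqrt{2k+n}$. The naive pointwise bound $|F_\ell(\lambda_k)|^2\le\|F_\ell\|_\infty^2\lesssim 2^\ell\|F_\ell\|_2^2$ (Bernstein) costs an extra factor $2^\ell$ and forces the strictly worse hypothesis $s>n(1/p-1/2)+1/2$. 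The desired estimate instead comes from comparing the sum with $t^{-2n(1/p-1/2)}\int_0^1|F_\ell(\lambda)|^2\lambda^{2n(1/p-1/2)-1}\,d\lambda\le t^{-2n(1/p-1/2)}\|F_\ell\|_2^2$, which is finite because $2n(1/p-1/2)\ge 2$ on the range $p\le 2n/(n+2)$. When the eigenvalue spacing $t/\sqrt k$ does not exceed $2^{-\ell}$, this Riemann comparison is direct; in the complementary regime one factors $F_\ell(t\sqrt H)=(I+H)^{-\varepsilon}G_\ell(\sqrt H)$ with $G_\ell(\lambda)=(1+\lambda^2)^\varepsilon F_\ell(t\lambda)$, invokes Lemma~\ref{Lem2} to handle $(I+H)^{-\varepsilon}$ via an $L^2\to L^r$ mapping for $r$ slightly less than $2$, and controls $(1+\lambda^2)^\varepsilon$ on the support $\{|\lambda|\le C/t\}$ of $F_\ell(t\,\cdot)$ by $O(t^{-2\varepsilon})$. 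The spectral gap $H\ge n\ge 2$ (essential for $n\ge 2$) ensures this auxiliary operator is well-behaved, and the $\varepsilon$-loss appears only in the restricted $\ell$-regime $2^\ell t^2>1$, where the geometric sum's dominance by the boundary $\ell_0\sim -2\log_2 t$ produces a compensating $t^{2[s-n(1/p-1/2)]}$ factor that absorbs it. Once the key estimate is in hand, summation over $\ell$ completes the proof.
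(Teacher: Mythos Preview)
Your overall architecture (dyadic Littlewood--Paley decomposition of $\widehat F$, finite speed of propagation, Lemma~\ref{le3.1}, then the restriction estimate of Lemma~\ref{le2.2}) is the same as the paper's, and the reduction to the ``key estimate'' $\|F_\ell(t\sqrt H)\|_{p\to 2}\le Ct^{-n(1/p-1/2)}\|F_\ell\|_2$ is correct. The gap is that this key estimate is not established. Your Riemann comparison works only when the \emph{coarsest} eigenvalue spacing in the variable $t\sqrt{2k+n}$, which is $\sim t$ (attained at $k=O(1)$), does not exceed the oscillation scale $2^{-\ell}$ of $F_\ell$; that is, only when $2^\ell t\lesssim 1$. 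For $2^\ell t\gg 1$ (in particular throughout the range $t^{-1}\lesssim 2^\ell\lesssim t^{-2}$ that you place in the ``direct'' regime) the sampling is too coarse near the bottom of the spectrum and the discrete sum is not controlled by $\|F_\ell\|_2^2$; the naive Bernstein bound then costs the extra $2^{\ell/2}$ you already noted, forcing $s>n(1/p-1/2)+1/2$. Your proposed remedy via Lemma~\ref{Lem2} is not coherent: factoring $F_\ell(t\sqrt H)=(I+H)^{-\varepsilon}G_\ell(\sqrt H)$ and using the $L^2\to L^r$ bound of Lemma~\ref{Lem2} changes the target space away from $L^2$, while you have already committed to an $L^p\to L^2$ bound in your application of Lemma~\ref{le3.1}; and the claimed ``compensating $t^{2[s-n(1/p-1/2)]}$ factor'' does not appear from the argument you sketch.

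The paper handles exactly this difficulty by inserting a mollifier $\xi_N$ at scale $N^{-1}\approx t$ in the $\sqrt H$-spectral variable, chosen so that the relevant eigenvalues $\sqrt{2k+n}$ become uniformly separated after rescaling by $N$; this makes the discrete-to-integral comparison rigorous for \emph{all} $\ell$ simultaneously (Lemma~\ref{le3.3}, see in particular the disjointness used in \eqref{meq2}). The price is the remainder $(\delta_tF-\xi_N*\delta_tF)(\sqrt H)$, whose analysis (Lemmas~\ref{le3.4}--\ref{le3.5}) is the substantive core of the proof: one needs a further dyadic decomposition $\phi(2^{-j}t\sqrt H)$ in the spectral variable, the high-order vanishing $\widehat\xi^{(0)}(0)=1$, $\widehat\xi^{(1)}(0)=\cdots=\widehat\xi^{(\kappa)}(0)=0$, and a careful balancing of two competing bounds in $j$. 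Lemma~\ref{Lem2} enters here, but in the role of passing from $L^p\to L^2$ bounds to $L^p\to L^{r_1}$ bounds so that the commutator lemma can be applied with a target $r_1$ slightly below $2$---not in the way you describe. Your sketch does not supply any substitute for this remainder analysis, and without it the sharp threshold $s>n(1/p-1/2)$ is not reached.
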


Before we start the proof of Theorem~\ref{Thm2}, let us show that Theorem~\ref{Thm1} is a straighforward consequence of Theorem~\ref{Thm2}. Indeed,
we take  $F(\lambda)=(1-\lambda^2)^{\delta}_+$ and $t=R^{-1}$. Note that $F\in W^2_{s}$ if and only if $\delta>s-1/2$.
	Hence, it follows from Theorem~\ref{Thm2} that for all $\delta>\delta(p)=n(1/p-1/2)-1/2$ and $p<q<p',$
$$
	\|[b, F(t\sqrt{H})]\|_{q\rightarrow q}\leq  C\|b\|_{\rm BMO}\|F\|_{W^2_{s}}\leq C\|b\|_{\rm BMO}
$$
	with a bound $C$   independent of $t$. Then we have
	$$\sup_{R>0}\big\|\big[b, S_R^{\delta}(H) \big]\big\|_{q\rightarrow q}\leq C\|b\|_{\mathrm{BMO}}.$$ 	
	This completes the proof of Theorem \ref{Thm1}.

\medskip

The proof of Theorem~\ref{Thm2}  is inspired by \cite{COSY,CTW,DOS}.
To  prove Theorem~\ref{Thm2}, we
fix an even function   $\eta\in C_c^{\infty}(\mathbb{R}^n)$  supported in    $  \{ 1/4 \leq |u|\leq 1\}$
such that   $\sum_{\ell \in\mathbb{Z}}\eta(2^{-\ell }u)=1, \forall u\neq0$. Set $\eta_0(u)=1-\sum_{\ell \geq1} \eta(2^{-\ell}u).$
Write  $\eta_{\ell }(u)=\eta(2^{-\ell}u)$ for $\ell\geq 1$.  Define
  \begin{align}\label{aaab2}
    F^{(\ell)}(\lambda)=\frac{1}{2\pi} \int_{-\infty}^{+\infty} \eta_{\ell}(u)\widehat{F}(u)e^{i\lambda u}du,\ \ \ell\geq0.
  \end{align}
Then we have
\begin{align}\label{aaab3}
  F(t\sqrt{H})=\sum_{\ell=0}^{\infty}F^{(\ell)}(t\sqrt{H}).
\end{align}
Noting that supp\,$F\subseteq[-1,1]$ and the eigenvalue of the Hermite operator is not less than $1$,
  we can always assume that $0<t\leq1$ in estimate~\eqref{ee3.1}. Let $N=50([t^{-1}]+1)(N\approx t^{-1})$. We pick up an even function $\xi\in C_c^{\infty}([-1,1])$ with
\begin{align}\label{ke1}
\widehat{\xi}^{(0)}(0)=1,\ \ \widehat{\xi}^{(1)}(0)=\cdots=\widehat{\xi}^{(\kappa)}(0)=0,
\end{align}
where $\kappa$ is a large enough integer  that will be chosen later. Let  $\xi_N(\lambda)=N\xi(N\lambda)$.
 Then
    \begin{align*}
     [b,F(t\sqrt{H})]&=      [b, \delta_tF(\sqrt{H})]\nonumber\\
     &= [b,(\xi_N*\delta_tF)(\sqrt{H})]+[b,(\delta_tF-\xi_N*\delta_tF)(\sqrt{H})].
\end{align*}

It  is  clear that to prove Theorem                                                                                                                                                                                                                                                                                                           ~\ref{Thm2}, it is sufficient to show the following two lemmas~\ref{le3.3} and  \ref{le3.4}.

\begin{lemma}\label{le3.3}
Suppose $b\in \mathrm{BMO(\mathbb{R}^n)}$. Let $1\leq p\leq 2n/(n+2)$, $s>n(1/p-1/2)$ and $p<q<p'$.
 Then for   any even Borel function such that supp$F\subseteq[-1,1]$ and $F\in W^{2}_{s}(\mathbb{R})$,
	\begin{eqnarray}\label{e3.1}
		\sup_{t>0}\left\|[b,(\xi_N*\delta_tF)(\sqrt{H})]\right\|_{q\rightarrow q}\leq C\|b\|_{\mathrm{BMO}}\|F\|_{W^{2}_{s}}.
	\end{eqnarray}
\end{lemma}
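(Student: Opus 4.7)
The plan is a Fourier-side dyadic decomposition of $\xi_N\ast\delta_tF$, followed by level-by-level application of Lemma~\ref{le3.1}, with each operator norm $\|G_\ell(\sqrt{H})\|_{p\to 2}$ controlled through the discrete restriction estimate (Lemma~\ref{le2.2}) and the a priori estimate of Lemma~\ref{Lem2}. The first reduction uses the spectral gap: since $\mathrm{supp}\,F\subseteq[-1,1]$ and $\sqrt{H}\geq\sqrt{n}\geq\sqrt{2}$, the operator $(\xi_N\ast\delta_tF)(\sqrt{H})$ vanishes for $t\gtrsim 1$, so the supremum reduces to $0<t\leq 1$ with $N\approx t^{-1}$.

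Using the dyadic cut-offs $\eta_\ell$ in the Fourier variable, set
$$G_\ell(\lambda)=\frac{1}{2\pi}\int_{-\infty}^{\infty}\eta_\ell(u)\,\widehat{\xi}(u/N)\,t^{-1}\widehat{F}(u/t)\,e^{i\lambda u}\,du,$$
so that $\xi_N\ast\delta_tF=\sum_{\ell\geq 0}G_\ell$, each $G_\ell$ is even, and $\widehat{G_\ell}$ is supported in $\{|u|\leq 2^\ell\}$. Lemma~\ref{le2.3} gives $\mathrm{supp}\,K_{G_\ell(\sqrt{H})}\subseteq\mathfrak{D}_{2^\ell}$, and Lemma~\ref{le3.1} with $s=2$, $\rho=2^\ell$ produces, for $p<q<2$,
$$\|[b,G_\ell(\sqrt{H})]\|_{q\to q}\leq C\|b\|_{\mathrm{BMO}}\,(2^\ell)^{n(1/p-1/2)}\,\|G_\ell(\sqrt{H})\|_{p\to 2}.$$
The range $2<q<p'$ follows by duality (since $\overline{G_\ell}(\sqrt{H})$ has a kernel supported in the same $\mathfrak{D}_{2^\ell}$), and the endpoint $q=2$ by interpolation.

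For the key estimate $\|G_\ell(\sqrt{H})\|_{p\to 2}$, I would employ the identity $\|G_\ell(\sqrt{H})\|_{p\to 2}^2=\||G_\ell|^2(\sqrt{H})\|_{p\to p'}$ together with the expansion $|G_\ell|^2(\sqrt{H})=\sum_k|G_\ell(\sqrt{2k+n})|^2P_k$. The triangle inequality and the bound $\|P_k\|_{p\to p'}\leq\|P_k\|_{p\to 2}^2\leq C(k+1)^{\delta(p)-1/2}$ from Lemma~\ref{le2.2} give
$$\|G_\ell(\sqrt{H})\|_{p\to 2}^2\leq C\sum_{k\geq 0}|G_\ell(\sqrt{2k+n})|^2(k+1)^{\delta(p)-1/2}.$$
The asymptotic spacing $\sqrt{2(k+1)+n}-\sqrt{2k+n}\sim\lambda^{-1}$, combined with the fact that $G_\ell$ is effectively supported in $|\lambda|\lesssim 1/t$ (with fast decay beyond, controlled by the smoothness of $\widehat{G_\ell}$ and Lemma~\ref{Lem2} for tails), allows one to dominate this sum by $C\,t^{-2\delta(p)}\|G_\ell\|_2^2$. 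A Plancherel calculation in $u$ together with the $W^2_s$-weighted bound on $\widehat{F}$ and the boundedness of $\widehat{\xi}$ then yields
$$\|G_\ell\|_2^2\leq C\,t^{2s-1}\,2^{-2\ell s}\,\|F\|_{W^2_s}^2.$$

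Combining the displays and using $\delta(p)+1/2=n(1/p-1/2)$,
$$\|[b,G_\ell(\sqrt{H})]\|_{q\to q}\leq C\|b\|_{\mathrm{BMO}}\|F\|_{W^2_s}\,2^{\ell(n(1/p-1/2)-s)}\,t^{s-n(1/p-1/2)}.$$
Since $s>n(1/p-1/2)$ the sum over $\ell\geq 0$ is a convergent geometric series and $t^{s-n(1/p-1/2)}\leq 1$ for $t\leq 1$, giving the desired uniform bound. The principal obstacle is the spectral multiplier estimate of the third paragraph: absent the continuous restriction estimate~\eqref{e1.4}, one has to substitute the discrete bound of Lemma~\ref{le2.2} and then pass carefully from the sum to the integral using the Hermite eigenvalue density $\sim\lambda\,d\lambda$, the a priori bound Lemma~\ref{Lem2} for tail control, and the spectral gap $\sqrt{H}\geq\sqrt{n}$ to avoid boundary effects at low frequencies.
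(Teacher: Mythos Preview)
Your overall strategy---dyadic decomposition on the Fourier side, then Lemma~\ref{le3.1} piece by piece, with each $\|G_\ell(\sqrt H)\|_{p\to 2}$ handled via Lemma~\ref{le2.2}---matches the paper's. The gap is in the \emph{scale} at which you localize. You apply $\eta_\ell$ to the Fourier variable $u$ of $\xi_N\ast\delta_tF$, obtaining kernel radius $\rho=2^\ell$; the paper applies $\eta_\ell$ to the Fourier variable of $F$ itself (i.e.\ to $u/t$), obtaining $\rho=2^{\ell+1}t$. This difference is fatal for your low-frequency pieces. Concretely, your Plancherel bound
\[
\|G_\ell\|_2^2\;\leq\; C\,t^{2s-1}\,2^{-2\ell s}\,\|F\|_{W^2_s}^2
\]
is false for $\ell=0$: on $\mathrm{supp}\,\eta_0$ the variable $v=u/t$ ranges over all of $[-1/t,1/t]$, including a neighbourhood of the origin where the weight $(1+v^2)^{-s}$ gives no gain. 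The correct bound there is only $\|G_0\|_2^2\leq C\,t^{-1}\|F\|_2^2$. Feeding this into your chain yields
\[
\|[b,G_0(\sqrt H)]\|_{q\to q}\;\lesssim\;\|b\|_{\mathrm{BMO}}\cdot 1^{n(1/p-1/2)}\cdot t^{-\delta(p)}\cdot t^{-1/2}\|F\|_2
\;=\;C\,\|b\|_{\mathrm{BMO}}\,t^{-n(1/p-1/2)}\|F\|_2,
\]
which blows up as $t\to 0$. The paper's choice $\rho=2^\ell t$ produces the extra factor $t^{n(1/p-1/2)}$ in Lemma~\ref{le3.1} that exactly cancels the $t^{-n(1/p-1/2)}$ coming from the spectral estimate, leaving only $2^{\ell n(1/p-1/2)}\|F^{(\ell)}\|_2$, whose sum over $\ell$ is the Besov norm $\|F\|_{B^{2,1}_{n(1/p-1/2)}}\leq C\|F\|_{W^2_s}$.

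Two smaller remarks. First, the passage ``sum to integral via eigenvalue density'' is exactly where the paper exploits the structure $\xi_N\ast(\cdot)$: writing $G_\ell=\xi_N\ast H_\ell$ and using Cauchy--Schwarz plus the disjointness of the intervals $[N\sqrt{2k+n}-1,N\sqrt{2k+n}+1]$ (this is why $N\approx 50/t$) converts $\sum_k|G_\ell(\sqrt{2k+n})|^2$ into $CN\|H_\ell\|_2^2$ rigorously; your heuristic with the density $\lambda\,d\lambda$ is the right intuition but not a proof. Second, Lemma~\ref{Lem2} plays no role in this lemma---the paper uses it only in Lemma~\ref{le3.4}. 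Here the tail $t\sqrt{2k+n}>4$ is controlled directly by the compact support of $F$ and the Schwartz decay of $\widehat{\eta_\ell}$ (see the paper's estimate~\eqref{ee11}).
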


\smallskip

\begin{lemma}\label{le3.4}
	Suppose $b\in \mathrm{BMO(\mathbb{R}^n)}$. Let $1\leq p\leq 2n/(n+2)$, $s>n(1/p-1/2)$ and $p<q<p'$.
 Then for   any  even Borel function
  such that supp$F\subseteq[-1,1]$ and $F\in W^{2}_{s}(\mathbb{R})$,
	\begin{eqnarray}\label{e3.2}
		\sup_{t>0}\left\| [b,(\delta_tF-\xi_N*\delta_tF)(\sqrt{H})]\right\|_{q\rightarrow q}\leq C\|b\|_{\mathrm{BMO}}\|F\|_{W^{2}_{s}}.
	\end{eqnarray}
\end{lemma}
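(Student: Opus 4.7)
The plan is to adapt the Fourier-side dyadic-decomposition strategy of \cite{COSY,CTW,DOS} to the Hermite setting, using the discrete restriction estimate \eqref{SC1} together with the \emph{a priori} bound \eqref{Cruc} to substitute for the continuous spectral-density estimate \eqref{e1.4}, which is unavailable for $H$. Set $G := \delta_t F - \xi_N * \delta_t F$ and decompose $G = \sum_{\ell\geq 0} G^{(\ell)}$ exactly as in \eqref{aaab2}, with $\widehat{G^{(\ell)}} = \eta_\ell \widehat G$. Because $\widehat G(u) = t^{-1}\widehat F(u/t)\bigl(1 - \widehat{\xi}(u/N)\bigr)$ and the moment conditions \eqref{ke1} give $|1 - \widehat{\xi}(u/N)| \leq C\min(1,(|u|/N)^{\kappa+1})$, each piece $G^{(\ell)}$ acquires smallness from two complementary sources: the moment factor $(2^\ell/N)^{\kappa+1}$ controls low $\ell$ (where $2^\ell \ll N \approx t^{-1}$), while the Sobolev decay of $\widehat F$ controls large $\ell$.

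By Lemma~\ref{le2.3} the kernel of $G^{(\ell)}(\sqrt H)$ is supported in $\mathfrak{D}_{2^\ell}$. Applying Lemma~\ref{le3.1} with the pair $(p_0,s_0) = (p,2)$ (which covers the subrange $p < q < 2$; the subrange $2 < q < p'$ is handled by the dual pair $(p_0,s_0)=(2,p')$, and $q=2$ follows from $L^2$-self-adjointness) gives
\[
\big\|\big[b, G^{(\ell)}(\sqrt H)\big]\big\|_{q\to q} \leq C\|b\|_{\mathrm{BMO}}\,(2^\ell)^{n(1/p-1/2)}\,\|G^{(\ell)}(\sqrt H)\|_{p\to 2}.
\]
Since $n(1/p-1/2) = \delta(p) + 1/2$, this is the growth factor in $\ell$ that the spectral-multiplier bound must beat.

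The core step is to estimate $\|G^{(\ell)}(\sqrt H)\|_{p\to 2}$. From the spectral expansion $G^{(\ell)}(\sqrt H) = \sum_k G^{(\ell)}(\sqrt{2k+n})P_k$, orthogonality of the $P_k$, and \eqref{SC1},
\[
\|G^{(\ell)}(\sqrt H)\|_{p\to 2}^2 \leq C\sum_{k\geq 0}|G^{(\ell)}(\sqrt{2k+n})|^2 (2k+n)^{\delta(p)-1/2}.
\]
The exponent $\delta(p)-1/2$ grows with $k$ and the sum ranges over $k$ up to $\sim t^{-2}$, so a naive bound is insufficient. We insert the weight $(1+2k+n)^{\pm 2\beta_p}$ with $\beta_p := \tfrac{n}{2}(\tfrac1p - \tfrac12) + \varepsilon$ — the exponent dictated by \eqref{Cruc} — and use that the Hermite eigenvalues satisfy $2k+n \geq n$, so the companion series $\sum_k (1+2k+n)^{-2\beta_p}(2k+n)^{\delta(p)-1/2} \sim \sum_k k^{-1-2\varepsilon}$ converges uniformly. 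This reduces matters to bounding $\sup_\lambda |G^{(\ell)}(\lambda)|(1+\lambda^2)^{\beta_p}$ via Plancherel, a Bernstein-type estimate for Fourier-localized functions, and the scaling $\|G\|_{W^2_s} \leq C\, t^{-1/2}\|F\|_{W^2_s}$, leading to a bound of the form $\|G^{(\ell)}(\sqrt H)\|_{p\to 2} \leq C\, 2^{-\ell s}\|F\|_{W^2_s}$ with constants uniform in $t \in (0,1]$. Summing over $\ell$ and invoking the hypothesis $s > n(1/p-1/2) = \delta(p)+1/2$ yields \eqref{e3.2}.

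The main obstacle is precisely this multiplier estimate. The atomic bound \eqref{SC1} costs a factor $(2k+n)^{1/2}$ per eigenvalue compared with the continuous density \eqref{e1.4} available in \cite{CTW}, so a triangle-inequality summation leaves an unbounded $t$-dependence; only the combination of \eqref{Cruc} with the spectral positivity gap $H \geq n$ (built into the structure of $(I+H)^{-\beta_p}$) absorbs this loss and restores the uniform-in-$t$ control required to sum the dyadic bounds.
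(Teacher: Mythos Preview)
There is a genuine gap in the central multiplier estimate. You claim that inserting the weight $(1+2k+n)^{\pm 2\beta_p}$ and using the convergence of $\sum_k (2k+n)^{-1-2\varepsilon}$ reduces the problem to controlling $\sup_\lambda |G^{(\ell)}(\lambda)|(1+\lambda^2)^{\beta_p}$, and that ``Plancherel, a Bernstein-type estimate'' then gives $\|G^{(\ell)}(\sqrt H)\|_{p\to 2}\le C\,2^{-\ell s}\|F\|_{W^2_s}$ uniformly in $t$. This does not work. First, Bernstein costs a factor $2^{\ell/2}$ when passing from $\|G^{(\ell)}\|_2$ to $\|G^{(\ell)}\|_\infty$, so after multiplying by $(2^\ell)^{n(1/p-1/2)}$ and summing you only get convergence for $s>n(1/p-1/2)+1/2$, a half-derivative worse than needed. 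Second, and more seriously, the weight $(1+\lambda^2)^{\beta_p}$ is unbounded; controlling the weighted sup forces you to produce decay of $G^{(\ell)}(\lambda)$ for large $\lambda$, which amounts to differentiating $\widehat{G^{(\ell)}}(u)=\eta_\ell(u)\,t^{-1}\widehat F(u/t)(1-\widehat\xi(u/N))$. Each derivative landing on $\widehat F(u/t)$ produces a factor $t^{-1}$, destroying uniformity in $t\in(0,1]$. Your final paragraph correctly identifies the obstacle, but the device you propose does not remove it: you invoke \eqref{Cruc} only to motivate the exponent $\beta_p$ in a scalar series, not as the operator bound $\|(I+H)^{-\beta_p}\|_{2\to r_1}$.

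The paper's proof addresses exactly these two points. It applies Lemma~\ref{le3.1} with the pair $(p,r_1)$, $r_1$ close to $p$, so the growth factor is only $(2^\ell t)^{n(1/p-1/r_1)}$ with $n(1/p-1/r_1)<\varepsilon/2$; the passage from $L^p\to L^2$ to $L^p\to L^{r_1}$ is where \eqref{Cruc} is genuinely used. It then introduces a second dyadic decomposition $\psi(t\sqrt H)+\sum_{j\ge 0}\phi(2^{-j}t\sqrt H)$ in the \emph{spectral} variable, so that on each piece the eigenvalues lie in a bounded window and no $\lambda$-decay is required. Within each window the lattice sum $\sum_k|G^{(\ell)}(\sqrt{2k+n})|^2$ is converted to an $L^2$-integral via the disjoint-intervals argument (the core of Lemma~\ref{le3.5}), which avoids the Bernstein loss entirely. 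Your outline is missing both the $j$-decomposition and the discrete-to-continuous $L^2$ trick; without them the claimed uniform bound on $\|G^{(\ell)}(\sqrt H)\|_{p\to 2}$ cannot be obtained at the sharp regularity $s>n(1/p-1/2)$.
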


\medskip

\subsection{Proof of Lemma~\ref{le3.3}}
  First, we note that
 $$\supp \widehat{(\xi_N*\delta_tF^{(\ell)}})\subseteq[-2^{\ell+1}t,2^{\ell+1}t].
 $$
 By Lemma~\ref{le2.3}, we have that  $K_{\xi_N*\delta_tF^{(\ell)}}\subseteq\mathfrak{D}_{2^{\ell+1}t}$.
 We then apply Lemma~\ref{le3.1}  to see that for $p<q<2$,
  \begin{align}\label{eefe1}
  \|[b,(\xi_N*\delta_tF)(\sqrt{H})]\|_{q\rightarrow q}
  \leq C\|b\|_{\mathrm{BMO}}\sum_{\ell=0}^{\infty}\|(\xi_N*\delta_tF^{(\ell)})(\sqrt{H})\|_{p\rightarrow {2}}(2^{\ell}t)^{n(\frac1p-\frac1{2})},
  \end{align}
and we will estimate each term $\|\xi_N*\delta_tF^{(\ell)}(\sqrt{H})\|_{p\rightarrow {2}}$ in the sequel. To do it,
 we pick up a function  $\psi\in C_c^{\infty}(\mathbb{R})$ with support $[-4,4]$ and $\psi=1$ in $[-2,2]$ to write
\begin{align}\label{e3.7}
 (\xi_N*\delta_tF^{(\ell)})(\sqrt{H}) =\psi(t\sqrt{H})(\xi_N*\delta_tF^{(\ell)})(\sqrt{H})
+ (1-\psi)(t\sqrt{H})(\xi_N*\delta_tF^{(\ell)})(\sqrt{H}).
 \end{align}
 Recall that $1\leq p\leq 2n/(n+2), n\geq2$, we have $\delta(p)=n(1/p-1/2)-1/2$. It follows by Lemma~\ref{le2.2}  that
  \begin{align}\label{meq1}
\|\psi(t\sqrt{H})(\xi_N*\delta_tF^{(\ell)})(\sqrt{H})f\|_{2}^2
&= \sum_{k\geq0}\psi^2(t\sqrt{2k+n})(\xi_N*\delta_tF^{(\ell)})^2(\sqrt{2k+n})\|P_kf\|_2^2\nonumber\\
&\leq C\sum_{k\geq0}k^{\delta(p)-1/2}\psi^2(t\sqrt{2k+n})(\xi_N*\delta_tF^{(\ell)})^2(\sqrt{2k+n})\|f\|_p^2.
 \end{align}
Noting supp\,$\psi\subseteq[-4,4]$, supp\,$\xi\subseteq[-1,1]$, $t\approx N^{-1}$ and $\delta(p)\geq1/2$, we have
 \begin{align}\label{meq2}
\|\psi(t\sqrt{H})(\xi_N*\delta_tF^{(\ell)})(\sqrt{H})\|_{p\rightarrow2}^2
&\leq C\sum_{t\sqrt{2k+n}\leq4}k^{\delta(p)-1/2}(\xi_N*\delta_tF^{(\ell)})^2(\sqrt{2k+n})\nonumber\\
&\leq Ct^{-2(\delta(p)-1/2)}\sum_{t\sqrt{2k+n}\leq4}(\xi_N*\delta_tF^{(\ell)})^2(\sqrt{2k+n})\nonumber\\
&\leq C\|\xi\|_2^2t^{-2(\delta(p)-1/2)}\sum_{t\sqrt{2k+n}\leq4}\int_{N\sqrt{2k+n}-1}^{N\sqrt{2k+n}+1}|F^{(\ell)}(tN^{-1}y)|^2dy\nonumber\\
&\leq Ct^{-2(\delta(p)-1/2)}\int_{\mathbb{R}}|F^{(\ell)}(tN^{-1}y)|^2dy\approx Ct^{-2n(\frac1p-\frac12)}\|F^{(\ell)}\|_2^2,
 \end{align}
where  in the last inequality we use the fact that the sets $\big\{[N\sqrt{2k+n}-1,N\sqrt{2k+n}+1]\big\}_k$ are disjoint whenever $t\sqrt{2k+n}\leq4$ .

Now we consider the term $(1-\psi)(t\sqrt{H})(\xi_N*\delta_tF^{(\ell)})(\sqrt{H})$.
We apply Lemma~\ref{le2.2} to obtain
  \begin{align}\label{meq3}
\|(1-\psi)(t\sqrt{H})(\xi_N*\delta_tF^{(\ell)})(\sqrt{H})\|_{p\rightarrow2}^2
&\leq C\sum_{k\geq0}k^{\delta(p)-1/2}(1-\psi)^2(t\sqrt{2k+n})(\xi_N*\delta_tF^{(\ell)})^2(\sqrt{2k+n})\nonumber\\
&\leq C\sum_{t\sqrt{2k+n}\geq2}k^{\delta(p)-1/2}(\xi_N*\delta_tF^{(\ell)})^2(\sqrt{2k+n}).
\end{align}
Since  supp\,$\xi\in[-1,1]$, supp\,$(1-\psi)\in[2,\infty)$, supp\,$F\in[-1,1]$,  $\widehat{\eta_{\ell}}$ is a Schwartz function and $0<tN^{-1}\leq1/2$,  we have that for any $M>0$
\begin{align}\label{ee11}
|(1-\psi)(t\lambda)(\xi_N*\delta_tF^{(\ell)})(\lambda)|
&\leq \|\xi\|_1
|(1-\psi)(t\lambda)|\|\chi_{[t\lambda-tN^{-1},t\lambda+tN^{-1}]}F*\widehat{\eta_{\ell}}\|_{L^{\infty}}\nonumber\\
 &\leq C_{M}\chi_{|t\lambda|\geq2}\int_{-1}^{1}|F(y)|2^{\ell}\left(1+2^{\ell}\big||t\lambda|-y\big|\right)^{-M}dy\nonumber\\[4pt]
&\leq C_{M}\chi_{|t\lambda|\geq2}2^{-(M-1)\ell}|t\lambda|^{-M}\|F\|_2.
 \end{align}
Hence,
\begin{align}\label{meq4}
  {\rm RHS \ of }\ \eqref{meq3}&\leq C2^{-2(M-1)\ell}\|F\|_2^2\sum_{t\sqrt{2k+n}\geq2}k^{\delta(p)-1/2}(t\sqrt{2k+n})^{-2M}\nonumber\\
  &\leq C2^{-2(M-1)\ell}t^{-2n(\frac1p-\frac12)}\|F\|_2^2.
\end{align}
Recall that  $s>n(1/p-1/2)$, $W^2_s(\mathbb{R})\subseteq B^{2,1}_{n(\frac1p-\frac12)}(\mathbb{R})$,
which in combination with estimates~\eqref{meq2} and \eqref{meq4} implies that
  \begin{align*}
 {\rm RHS \ of }\ \eqref{eefe1}\leq &C_{M} \|b\|_{\mathrm{BMO}}
 \sum_{\ell\geq0}\left(2^{\ell n(\frac1p-\frac1{2})}\|F^{(\ell)}\|_2+2^{-(M-1)\ell}\|F\|_2\right)\\
 \leq&C\|b\|_{\mathrm{BMO}}\left(\|F\|_{B^{2,1}_{n(\frac1p-\frac12)}}+\|F\|_2\right)
 \leq C\|b\|_{\mathrm{BMO}}\|F\|_{W^2_s}.
  \end{align*}
By duality and interpolation argument, we see that for $p<q< p'$ and $s>n(1/p-1/2)$,
$$\|[b,(\xi_N*\delta_tF)(\sqrt{H})]\|_{q\rightarrow q}\leq C\|b\|_{\mathrm{BMO}}\|F\|_{W_s^2}.$$
This completes the proof of Lemma~\ref{le3.3}.
   \hfill{}$\Box$

\medskip

\subsection{Proof of Lemma~\ref{le3.4}}  To prove Lemma~\ref{le3.4}, we need the following result.

\begin{lemma}\label{le3.5}
 Let  $1\leq p\leq 2n/(n+2)$. Suppose $\theta\in C_c^{\infty}([-8,8])$.   With the notations as in Lemma~\ref{le3.4}, there exists a constant $C$ independent of $t$ such that
\begin{align}\label{Pro3.5}
&\Big\|\theta(2^{-j}t\sqrt{H})(\delta_tF^{(\ell)}-\xi_N*\delta_tF^{(\ell)})(\sqrt{H})\Big\|_{p\rightarrow2}\nonumber\\
&\leq
\begin{cases}
C\|\theta\|_{\infty}\|F\|_{W^2_s}2^{-j/2}N^{-2s}(2^{j}N)^{n(\frac1p-\frac12)}(2^{\ell}N^{-2})^{\kappa+1-s}, & \mathrm{if}\   2^{\ell}tN^{-1}\leq1;  \\[6pt]
C\|\theta\|_{\infty}\|F\|_{W^2_s}2^{-j/2}N^{-2s}(2^{j}N)^{n(\frac1p-\frac12)}(2^{\ell}N^{-2})^{1/2-s}, & \mathrm{if} \  2^{\ell}tN^{-1}>1
\end{cases}
  \end{align}
for all $j,\ell\in\mathbb{N}$, where $\kappa$ is the constant in estimate~\eqref{ke1}.
\end{lemma}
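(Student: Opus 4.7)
The strategy is to combine the spectral restriction estimate \eqref{SC1} of Lemma~\ref{le2.2} with a Plancherel-based estimate on the one-dimensional symbol $G(\lambda):=\delta_tF^{(\ell)}(\lambda)-\xi_N*\delta_tF^{(\ell)}(\lambda)$, exploiting the vanishing-moment hypotheses~\eqref{ke1} of $\xi$ to extract the $\ell$-dependent decay.

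Spectrally diagonalizing and applying Lemma~\ref{le2.2} to each eigenspace projection gives
\begin{align*}
\|\theta(2^{-j}t\sqrt H)G(\sqrt H)\|_{p\to 2}^2
\leq C\|\theta\|_\infty^2\sum_{\sqrt{2k+n}\leq 8\cdot 2^j/t} k^{\delta(p)-1/2}|G(\sqrt{2k+n})|^2.
\end{align*}
Writing $\lambda_k=\sqrt{2k+n}$ and using the Hermite spectral density ($\#\{k:\lambda_k\sim 2^m\}\sim 2^{2m}$ with spacing $\sim 2^{-m}$), a dyadic Plancherel--P\'olya estimate in $\lambda_k$ converts the right-hand side, in the main regime where $\lambda_k$ exceeds the Fourier bandwidth $\sim 2^\ell t$ of $G$, into $C\|\theta\|_\infty^2(2^jN)^{2\delta(p)}\|G\|_2^2$. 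Sub-Nyquist bands (arising only when $2^\ell tN^{-1}>1$, i.e.\ Case~2) are treated separately via a Bernstein-plus-counting estimate that leverages the smallness of $\|F^{(\ell)}\|_2$.

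To estimate $\|G\|_2$, take the Fourier transform:
\begin{align*}
\widehat G(\xi)=\bigl[1-\widehat\xi(\xi/N)\bigr]\,t^{-1}\widehat{F^{(\ell)}}(\xi/t),
\end{align*}
which is supported in $\{|\xi|\sim 2^\ell t\}$ because $\widehat{F^{(\ell)}}=\eta_\ell\widehat F$ has support in $\{|u|\sim 2^\ell\}$. The moment conditions~\eqref{ke1} yield $|1-\widehat\xi(u)|\leq C|u|^{\kappa+1}$ for $|u|\leq 1$. Thus in Case~1 ($2^\ell tN^{-1}\leq 1$), Plancherel gives
\begin{align*}
\|G\|_2^2\leq CN\bigl(2^\ell N^{-2}\bigr)^{2(\kappa+1)}\|F^{(\ell)}\|_2^2,
\end{align*}
while in Case~2 only the trivial bound $\|G\|_2^2\leq CN\|F^{(\ell)}\|_2^2$ is available. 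Combined with the Sobolev estimate $\|F^{(\ell)}\|_2\leq 2^{-\ell s}\|F\|_{W^2_s}$ (from $\widehat{F^{(\ell)}}=\eta_\ell\widehat F$ and Cauchy--Schwarz on $\{|u|\sim 2^\ell\}$), arranging the resulting powers of $2^j,\,2^\ell,\,N$ produces exactly the claimed exponents $\kappa+1-s$ in Case~1 and $1/2-s$ in Case~2, with the prefactor $2^{-j/2}$ emerging from the geometric summation $\sum_{m\leq \log(2^jN)}2^{2m\delta(p)}\sim(2^jN)^{2\delta(p)}=2^{-j}(2^jN)^{2n(1/p-1/2)}N^{-1}$.

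The main technical obstacle is the dyadic Plancherel--P\'olya / sampling step: when the Fourier bandwidth $\sim 2^\ell t$ of $G$ exceeds the spectral spacing $\sim 1/\lambda_k$, the samples for $\lambda_k<2^\ell/N$ fall below Nyquist rate and the standard sampling inequality fails. The remedy is to isolate these sub-Nyquist bands and bound their contribution via $\|G\|_\infty\leq C(2^\ell t)^{1/2}\|G\|_2$ together with the elementary count $\#\{k:\lambda_k\sim 2^m\}\sim 2^{2m}$; verifying that this contribution is absorbed into the $(2^\ell N^{-2})^{1/2-s}$ factor uses the hypothesis $s>n(1/p-1/2)$ crucially, as well as the lower bound $\lambda_k\geq \sqrt{n}$ ensured by the eigenvalue structure of $H$. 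The rest is bookkeeping of powers of $2^j$, $2^\ell$, and $N$.
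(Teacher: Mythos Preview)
Your overall strategy---restriction estimate plus $L^2$ control of the symbol---is sound, but the sampling step contains a genuine gap that breaks the claimed exponents.

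\textbf{The sub-Nyquist issue is misdiagnosed.} The Fourier bandwidth of $G$ is $\sim 2^\ell t\sim 2^\ell N^{-1}$, and the spacing of the samples $\lambda_k=\sqrt{2k+n}$ is $\sim\lambda_k^{-1}$. Thus the Nyquist condition $\lambda_k\gtrsim 2^\ell N^{-1}$ fails whenever $2^\ell\gtrsim N$, not only in Case~2 ($2^\ell\gtrsim N^2$). The entire range $N\lesssim 2^\ell\lesssim N^2$ lies in Case~1 but contains sub-Nyquist bands; your assertion that these arise only in Case~2 is false.

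\textbf{Bernstein plus counting does not repair this.} Your proposed remedy bounds the sub-Nyquist contribution by
\[
\Bigl(\sum_{\lambda_k\lesssim A} k^{\delta(p)-1/2}\Bigr)\|G\|_\infty^2
\;\lesssim\; A^{2\delta(p)+1}\,(2^\ell N^{-1})\,\|G\|_2^2,
\]
where $A=\min(2^\ell N^{-1},2^jN)$. Tracking the powers (take $j=0$, $2^\ell\sim N^2$ in Case~1, or $A=2^jN$ in Case~2) this overshoots the claimed bound by a factor of order $2^{j/2}N$. The hypothesis $s>n(1/p-1/2)$ cannot absorb this loss; you would need roughly $s>n(1/p-1/2)+1/2$, which is not the stated range and would weaken the downstream Lemma~\ref{le3.4}.

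\textbf{How the paper circumvents this.} The paper never appeals to a band-limiting/Nyquist argument on $G$. Instead it rescales so that the samples $2^jN\lambda_k$ become \emph{unit-separated} (this is checked directly from $t\sqrt{2k+n}\le 2^{j+3}$), then writes
\[
G(\lambda_k)=(H*\Omega)(2^jN\lambda_k),\qquad
\widehat H(u)=|2^ju|^{s}\,2^jNt^{-1}\widehat F(2^jNt^{-1}u),\quad
\widehat\Omega(u)=|2^ju|^{-s}\bigl(1-\widehat\xi(2^ju)\bigr)\eta_\ell(2^j\mu^{-1}u),
\]
decomposes $\Omega=\sum_m\Omega^{(m)}$ into pieces supported on unit intervals $[m-1,m+1)$, and uses Cauchy--Schwarz plus disjointness of translated intervals to obtain
\[
\Bigl(\sum_k|G(\lambda_k)|^2\Bigr)^{1/2}\le\|H\|_2\sum_m\|\Omega^{(m)}\|_2.
\]
The amalgam-type norm $\sum_m\|\Omega^{(m)}\|_2$ is then controlled by pointwise decay of the inverse Fourier transform $\zeta_\mu^{(\ell)}$, and this is where the moment conditions~\eqref{ke1} enter. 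The factor $|2^ju|^{-s}$ built into $\widehat\Omega$ simultaneously transfers the $W^2_s$ regularity onto $\|H\|_2$ and produces the correct $(2^\ell N^{-2})^{1/2-s}$ decay in Case~2 without any Bernstein step. This is the idea your plan is missing.
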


The proof of Lemma~\ref{le3.5} will be given later.

\medskip

Now let us apply Lemma~\ref{le3.5} to prove Lemma~\ref{le3.4}.
We pick up functions $\psi\in C_c^{\infty}([-4,4])$ and $\phi\in C_c^{\infty}([2,8])$ such that
$
  \psi(\lambda)+\sum_{j\geq0} \phi(2^{-j}\lambda)=1 $  for all   $ \lambda>0.
$
Let $p<r<2$. For any $0<\varepsilon<1/2$, we  consider $r_1$ such that $r<r_1\leq2$ and $n(1/p-1/{r_1})<\varepsilon/2$.
Then we apply Lemma~\ref{le3.1} with \eqref{aaab3} to obtain
   \begin{align}\label{epe1}
 &\hspace{-0.5cm} \| [b,(\delta_tF-\xi_N*\delta_tF)(\sqrt{H})]\|_{r\rightarrow r}\nonumber\\
 &\leq C\|b\|_{\mathrm{BMO}}\sum_{\ell=0}^{\infty}
 \|  (\delta_tF^{(\ell)}-\xi_N*\delta_tF^{(\ell)})(\sqrt{H}) \|_{p\rightarrow {r_1}}(2^{\ell}t)^{n(\frac1p-\frac1{r_1})}
  \end{align}
and
\begin{align}\label{ffeq1}
 \|(\delta_tF^{(\ell)}-\xi_N*\delta_tF^{(\ell)})(\sqrt{H})\|_{p\rightarrow {r_1}}
&\leq  \|\psi(t\sqrt{H})(\delta_tF^{(\ell)}-\xi_N*\delta_tF^{(\ell)})(\sqrt{H})\|_{p\rightarrow {r_1}}\nonumber\\
&+\sum_{j\geq0} \| \phi(2^{-j}t\sqrt{H})(\delta_tF^{(\ell)}-\xi_N*\delta_tF^{(\ell)})(\sqrt{H})\|_{p\rightarrow {r_1}}\nonumber\\
&=: E_1(\ell, t)  +\sum_{j\geq0} E_2(j,\ell,  t).
\end{align}
In the following, we set  $\gamma=n(1/{r_1}-1/2)$.  For the term $E_1(\ell, t)$,  we note that
 supp~$\psi\subseteq[-4,4]$ and $N\approx t^{-1}> 1$. This  gives
 \begin{align*}
 \sup_{\lambda} |\psi(t\lambda)(1+|\lambda|^2)^{(\gamma+\varepsilon)/2}|\leq Ct^{-(\gamma+\varepsilon)}\leq CN^{\gamma+\varepsilon}.
 \end{align*}
 By Lemma \ref{Lem2} and Lemma~\ref{le3.5},
  \begin{align*}
E_1(\ell, t)
&\leq\|\psi(t\sqrt{H})(I+H)^{(\gamma+\varepsilon)/2}(\delta_tF^{(\ell)}-\xi_N*\delta_tF^{(\ell)})(\sqrt{H})\|_{p\rightarrow2}
\|(I+H)^{-(\gamma+\varepsilon)/2}\|_{2\rightarrow r_1}\nonumber\\
&\leq C\|\psi(t\sqrt{H})(I+H)^{(\gamma+\varepsilon)/2}(\delta_tF^{(\ell)}-\xi_N*\delta_tF^{(\ell)})(\sqrt{H})\|_{p\rightarrow2}\nonumber\\
 &
 \leq
  \begin{cases}
    C\|F\|_{W^2_s}N^{n(\frac1p-\frac12)+\gamma-2s+\varepsilon}(2^{\ell}N^{-2})^{\kappa+1-s}, & \mbox{if } 2^{\ell}tN^{-1}\leq1; \\[4pt]
   C\|F\|_{W^2_s}N^{n(\frac1p-\frac12)+\gamma-2s+\varepsilon}(2^{\ell}N^{-2})^{1/2-s}, & \mbox{if } 2^{\ell}tN^{-1}\geq1.
  \end{cases}
  \end{align*}
This, in combination  with the fact that $s>n(1/p-1/2)\geq1$, $n(1/p-1/{r_1})<\varepsilon/2$ and our selection of $\kappa$ such that $\kappa\geq s$,  yields
  \begin{align}\label{ffeq2}
  &\sum_{\ell=0}^{\infty}\|b\|_{\mathrm{BMO}}\|\psi(t\sqrt{H})(\delta_tF^{(\ell)}-\xi_N*\delta_tF^{(\ell)})(\sqrt{H})
  \|_{p\rightarrow {r_1}}(2^{\ell}t)^{n(\frac1p-\frac1{r_1})}\nonumber\\
  &\leq C\|b\|_{\mathrm{BMO}}\|F\|_{W^2_s}N^{2n(\frac{1}{r_1}-\frac12)-2s+\varepsilon}\nonumber\\
  &\hspace{0.5cm} \times\left(\sum_{2^{\ell}tN^{-1}<1}2^{\ell n(\frac1p-\frac1{r_1})}(2^{\ell}N^{-2})^{\kappa+1-s}+\sum_{2^{\ell}tN^{-1}
  \geq1}2^{\ell n(\frac1p-\frac1{r_1})}(2^{\ell}N^{-2})^{1/2-s}\right)\nonumber\\
  &\leq  C\|b\|_{\mathrm{BMO}}\|F\|_{W^2_s}N^{2n(\frac1p-\frac12)-2s+\varepsilon}.
  \end{align}

  Now we estimate the terms $E_2(j,\ell,  t)$. Since supp~$\phi\subseteq[2,8]\subseteq[-8,8]$ and $N\approx t^{-1}>1$, we have that
   $\sup_{\lambda} |\phi(2^{-j}t\lambda)(1+|\lambda|^2)^{(\gamma+\varepsilon)/2}|\leq C(2^jN)^{\gamma+\varepsilon}.$
By Lemma \ref{Lem2} and Lemma~\ref{le3.5},
\begin{align}\label{epe3}
E_2(j,\ell,  t)
 &\leq C\|\phi(2^{-j}t\sqrt{H})(I+H)^{(\gamma+\varepsilon)/2}(\delta_tF^{(\ell)}-\xi_N*\delta_tF^{(\ell)})(\sqrt{H})\|_{p\rightarrow2} \nonumber\\
 &\leq
 \begin{cases}
  C\|F\|_{W^2_s}2^{-j/2}N^{-2s}(2^jN)^{n(\frac1p-\frac12)+\gamma+\varepsilon}(2^{\ell}N^{-2})^{\kappa+1-s}, & \mbox{if } 2^{\ell}tN^{-1}\leq1; \\[4pt]
  C\|F\|_{W^2_s}2^{-j/2}N^{-2s}(2^jN)^{n(\frac1p-\frac12)+\gamma+\varepsilon}(2^{\ell}N^{-2})^{1/2-s}, & \mbox{if } 2^{\ell}tN^{-1}\geq1.
 \end{cases}
\end{align}
On another hand, we also have that
\begin{align}\label{epep2}
E_2(j,\ell,  t)&\leq C\left(\sum_{k\geq0}k^{\delta(p)-1/2}(2k+n)^{\gamma+\varepsilon}\phi^2(2^{-j}t\sqrt{2k+n})
(\delta_tF^{(\ell)}-\xi_N*\delta_tF^{(\ell)})^2(\sqrt{2k+n})\right)^{1/2}\nonumber\\
&\leq  C2^{-M\ell}2^{-(M+1)j}\|F\|_1\left(\sum_{2^{j+1}\leq t\sqrt{2k+n}\leq2^{j+3}}(2k+n)^{\delta(p)-1/2+\gamma+\varepsilon}\right)^{1/2}\nonumber\\
 &\leq C 2^{-M\ell}2^{-(M+1)j}(2^jN)^{n(\frac1p-\frac12)+\gamma+\varepsilon}\|F\|_2,
 \end{align}
where in the second inequality we use the inequality that
\begin{align*}
	|\phi(2^{-j}t\lambda)|\left(|\delta_tF^{(\ell)}(\lambda)|+|\xi_N*\delta_tF^{(\ell)}(\lambda)|\right)
	&\leq C_{M}2^{-M\ell}2^{-(M+1)j}\|F\|_2,
\end{align*}
which can be obtained by a similar discussion of estimate~\eqref{ee11}.

Next we set
$\alpha:=n(1/p-1/2)+\gamma+\varepsilon$, $\beta:=n(1/p-1/{r_1})$ and $\nu=(\alpha-1/2)/(M+1/2)$
with $ M>\alpha-1$. Recall that $1\leq p\leq 2n/(n+2)$, so $\alpha>1$ and $0<\nu<1$.
When  $2^{\ell}N^{-1}t\leq1$, we choose  $j_1>0$   such that
$$(2^{j_1}N)^{\alpha}2^{-j_1/2}N^{-2s}(2^{\ell}N^{-2})^{\kappa+1-s}=(2^{j_1}N)^{\alpha}2^{-(M+1)j_1}2^{-M\ell}.$$
 This, in combination with  estimates~\eqref{epe3} and  \eqref{epep2},   yields
\begin{align}\label{e3.19}
\sum_{j\geq0} E_2(j,\ell,  t)
\leq& C_M\|F\|_{W_s^2}\left(\sum_{j\geq j_1}2^{-M\ell}(2^jN)^{\alpha}2^{-(M+1)j}
+\sum_{j\leq j_1}2^{-j/2}(2^jN)^{\alpha}N^{-2s}(2^{\ell}N^{-2})^{\kappa+1-s}\right)\nonumber\\
\leq&C_M\|F\|_{W_s^2}2^{-M\nu \ell}N^{\alpha-2s(1-\nu)}(2^{\ell}N^{-2})^{(1-\nu)(\kappa+1-s)}.
\end{align}
When  $2^{\ell}N^{-1}t\geq1$, we choose  $j_2>0$   such that
 $$(2^{j_2}N)^{\alpha}2^{-j_2/2}N^{-2s}(2^{\ell}N^{-2})^{1/2-s}=(2^{j_2}N)^{\alpha}2^{-(M+1)j_2}2^{-M\ell}.$$
It follows by estimates~\eqref{epe3} and  \eqref{epep2} that
\begin{align}\label{e3.20}
\sum_{j\geq0}E_2(j,\ell,  t)
\leq& C_M\|F\|_{W_s^2}\left(\sum_{j\geq j_2}2^{-M\ell}(2^jN)^{\alpha}2^{-(M+1)j}
+\sum_{j\leq j_2}2^{-j/2}(2^jN)^{\alpha}N^{-2s}(2^{\ell}N^{-2})^{1/2-s}\right)\nonumber\\
\leq&C_M\|F\|_{W_s^2}2^{-M\nu \ell}N^{\alpha-2s(1-\nu)}(2^{\ell}N^{-2})^{(1-\nu)(1/2-s)}.
\end{align}
Combining estimates~\eqref{ffeq1}, \eqref{e3.19} and \eqref{e3.20}, we  use the facts that $s> n(1/p-1/2)\geq1$, $\kappa\geq s$ and $N\approx t^{-1}$ to see that
\begin{align}\label{ffeq3}
&\sum_{\ell\geq0}\sum_{j\geq0}\|b\|_{\mathrm{BMO}}(2^{\ell}t)^{\beta}\|\phi(2^{-j}t\sqrt{H})
(\delta_tF^{(\ell)}-\xi_N*\delta_tF^{(\ell)})(\sqrt{H})\|_{p\rightarrow r_1}\nonumber\\
&\leq C_M\|b\|_{\mathrm{BMO}}\|F\|_{W_s^2}N^{\alpha-2s(1-\nu)}\nonumber\\
&\hspace{0.5cm}  \times\left(\sum_{2^{\ell}N^{-1}t\leq1}2^{-M\nu \ell}(2^{\ell}N^{-2})^{(1-\nu)(\kappa+1-s)}(2^{\ell}t)^{\beta}
+\sum_{2^{\ell}N^{-1}t\geq1}2^{-M\nu \ell}(2^{\ell}N^{-2})^{(1-\nu)(1/2-s)}(2^{\ell}t)^{\beta}\right)\nonumber\\
 &\leq C_M\|b\|_{\mathrm{BMO}}\|F\|_{W_s^2}N^{\alpha+\beta-2M\nu-2s(1-\nu)}\nonumber\\
 &\hspace{0.5cm} \times \left(\sum_{2^{\ell}N^{-1}t\leq1}(2^{\ell}N^{-2})^{-M\nu+(1-\nu)(\kappa+1-s)+\beta}
 +\sum_{2^{\ell}N^{-1}t\geq1}(2^{\ell}N^{-2})^{-M\nu+(1-\nu)(1/2-s)+\beta}\right)\nonumber\\
 &\leq C_M\|b\|_{\mathrm{BMO}}\|F\|_{W_s^2}N^{\alpha+\beta-2s-2(M-s)\nu}\nonumber\\
 &\leq C\|b\|_{\mathrm{BMO}} \|F\|_{W_s^2},
\end{align}
 where we take $M$ and $\kappa$ such that $M\geq \max\{s,2\alpha\}$ and $\kappa\geq 2\alpha+s-2>s$. Recall that $\alpha>1$, $\beta<\varepsilon/2<1/4$ and $0<\nu<1$. When $s>\max\{(\alpha+\beta)/2,1/2\}$, i.e. $s>n(1/p-1/2)+\varepsilon/2$, we have
\begin{eqnarray}\label{eff1}
\left\{
\begin{array} {ll}
&\alpha+\beta-2s-2(M-s)\nu<0,\\[5pt]
& -M\nu+(1-\nu)(\kappa+1-s)+\beta>0,\\[5pt]
& -M\nu+(1-\nu)(1/2-s)+\beta<0.
\end{array}
\right.
\end{eqnarray}
So estimate~\eqref{ffeq3} holds.

Combining the estimates~\eqref{epe1}, \eqref{ffeq1}, \eqref{ffeq2}, \eqref{ffeq3}, we obtain that for any $s>n(1/p-1/2)+\varepsilon$ and any $r$ such that
 $0<1/p-1/r<\varepsilon/2$,
$$\|[b,(\delta_tF-\xi_N*\delta_tF)(\sqrt{H})]\|_{r\rightarrow r}\leq C_{\varepsilon}\|b\|_{\mathrm{BMO}}\|F\|_{W_s^2}.$$
By duality and interpolation argument, we have  proved estimate~\eqref{e3.2}  of Lemma~\ref{le3.4}
  for $p<q< p'$ and $s>n(1/p-1/2)+\varepsilon$.  Hence, we obtain the proof of  Lemma~\ref{le3.4}
provided Lemma~\ref{le3.5} is proved.
    \hfill{}$\Box$

 \medskip

Finally, let us  prove Lemma~\ref{le3.5}.

\begin{proof}[Proof of Lemma~\ref{le3.5}]
It follows from the Hermite expansion, Lemma~\ref{le2.2} and supp~$\theta\subseteq[-8,8]$ that
\begin{align}\label{feq1}
  &\Big\|\theta(2^{-j}t\sqrt{H})(\delta_tF^{(\ell)}-\xi_N*\delta_tF^{(\ell)})(\sqrt{H})\Big\|_{p\rightarrow2} \nonumber\\
  & \leq C\left(\sum_{k\geq0}k^{\delta(p)-1/2}\theta^2(2^{-j}t\sqrt{2k+n})(\delta_tF^{(\ell)}-\xi_N*\delta_tF^{(\ell)})^2(\sqrt{2k+n})\right)^{1/2}\nonumber\\
  &\leq C\|\theta\|_{\infty}(2^jt^{-1})^{\delta(p)-1/2}
  \left(\sum_{t\sqrt{2k+n}\leq2^{j+3}}(\delta_tF^{(\ell)}-\xi_N*\delta_tF^{(\ell)})^2(\sqrt{2k+n})\right)^{1/2}.
\end{align}
Let $0<\mu\leq1$, we first define two functions $H$ and $\Omega_{\mu,j}^{(\ell)}$ by
\begin{align*}
  &\widehat{H}(\lambda)=|2^{j}\lambda|^s2^jNt^{-1}\widehat{F}(2^{j}Nt^{-1}\lambda),\\
  &\widehat{\Omega}_{\mu,j}^{(\ell)}(\lambda)=|2^{j}\lambda|^{-s}(1-\widehat{\xi}(2^j\lambda))\eta_{\ell}(2^{j}\mu^{-1}\lambda).
\end{align*}
Write $\xi_{2^{-j}}(\lambda)=2^{-j}\xi(2^{-j}\lambda)$. Observe that
\begin{align}\label{feq2}
  (\delta_tF^{(\ell)}-\xi_N*\delta_tF^{(\ell)})(\sqrt{2k+n})
  &=(\delta_{t(2^jN)^{-1}}F^{(\ell)}-\xi_{2^{-j}}*\delta_{t(2^jN)^{-1}}F^{(\ell)})(2^jN\sqrt{2k+n})\nonumber\\
  &=H*\Omega_{tN^{-1},j}^{(\ell)}(2^jN\sqrt{2k+n}).
\end{align}
We write $\Omega_{\mu,j}^{(\ell)}$ as $\sum_{m\in\mathbb{Z}} \Omega_{\mu,j}^{(\ell),m}(\lambda),$
where
\begin{align*}
   \Omega_{\mu,j}^{(\ell),m}(\lambda):=
\begin{cases}
  \    \Omega_{\mu,j}^{(\ell)}(\lambda)\chi_{[m-1,m+1)}(\lambda), & m \ \mathrm{is}\  \mathrm{odd}; \\
  \    0, & m \ \mathrm{is} \ \mathrm{even}.
\end{cases}
\end{align*}
From H\"older's inequality and Minkowski's inequality, we have
\begin{align}\label{ineq4}
&\left(\sum_{t\sqrt{2k+n}\leq2^{j+3}}|H*\Omega_{tN^{-1},j}^{(\ell)}(2^jN\sqrt{2k+n})|^2\right)^{1/2}\nonumber\\
&\leq\left(\sum_{t\sqrt{2k+n}\leq2^{j+3}}\left(\sum_{m\in\mathbb{Z}}\left\|\Omega_{tN^{-1},j}^{(\ell),m}\right\|_2
\left(\int_{2^jN\sqrt{2k+n}-m-1}^{2^jN\sqrt{2k+n}-m+1}|H(y)|^2dy\right)^{1/2}\right)^2\right)^{1/2}\nonumber\\
&\leq\sum_{m\in\mathbb{Z}}\|\Omega_{tN^{-1},j}^{(\ell),m}\|_2
\left(\sum_{t\sqrt{2k+n}\leq2^{j+3}}\int_{2^jN\sqrt{2k+n}-m-1}^{2^jN\sqrt{2k+n}-m+1}|H(y)|^2dy\right)^{1/2}\nonumber\\
&\leq \sum_{m\in\mathbb{Z}}\|\Omega_{tN^{-1},j}^{(\ell),m}\|_2\|H\|_2,
 \end{align}
where  we use the fact that the sets $\big\{[2^jN\sqrt{2k+n}-m-1,2^jN\sqrt{2k+n}-m+1]\big\}_{k,m}$
are disjoint whenever $t\sqrt{2k+n}\leq2^{j+3}$ in the last inequality. Indeed, when $t\sqrt{2k+2+n}\leq 2^{j+3}$, we have
$$\sqrt{2k+n+2}+\sqrt{2k+n}\leq 2^{j+4}t^{-1}\leq 2^jN,$$
which is equivalent to
$$2^jN\sqrt{2k+n}-m+1\leq 2^jN\sqrt{2k+2+n}-m-1.$$
By the definition of homogeneous Sobolev spaces and $t\approx N^{-1}$, we have
\begin{align*}
\|H\|_2&\leq 2^{js}\|\delta_{t(2^jN)^{-1}}F\|_{\dot{W}^2_s}=2^{j/2}(tN^{-1})^{s-1/2} \|F\|_{\dot{W}^2_s}
\leq C2^{j/2}N^{-2s+1}\|F\|_{\dot{W}^2_s},
\end{align*}
which in combination with estimates~\eqref{feq2}-\eqref{ineq4} implies that
\begin{align}\label{eq3}
 {\rm RHS \ of}\ \eqref{feq1}\leq  C\|\theta\|_{\infty}\|F\|_{W^2_s}2^{-j/2}(2^jN)^{n(\frac1p-\frac12)}N^{-2s}\sum_m\|\Omega_{tN^{-1},j}^{(\ell),m}\|_2.
\end{align}
Hence,  the proof of  estimate~\eqref{Pro3.5} reduces  to  show that for any $\ell,j\in\mathbb{N}$ and any $0<\mu\leq1$,
\begin{align}\label{lemma4.3}
  \sum_{m\in\mathbb{Z}}\|\Omega_{\mu,j}^{(\ell),m}\|_2\leq
\begin{cases}
     C(2^{\ell}\mu)^{\kappa+1-s}, &  \mathrm{ if} \ 2^{\ell}\mu\leq1;\\[4pt]
     C(2^{\ell}\mu)^{1/2-s}, & \mathrm{if} \ 2^{\ell}\mu>1,
\end{cases}
\end{align}
since  the desired estimate~\eqref{Pro3.5} follows easily by setting $\mu=tN^{-1}$ and taking estimate~\eqref{lemma4.3}
into estimate~\eqref{eq3}.

Let us  prove estimate~\eqref{lemma4.3}. We write $\Omega_{\mu,j}^{(\ell)}(\lambda):=2^{-j}\zeta_{\mu}^{(\ell)}(2^{-j}\lambda)$, where
$$\widehat{\zeta_{\mu}^{(\ell)}}(a):=(1-\widehat{\xi}(a))\eta_{\ell}(\mu^{-1}a)|a|^{-s}.
$$
 We will show that for any $0<\mu\leq1$,
 \begin{align}\label{ffae1}
  |\zeta_{\mu}^{(0)}(\lambda)|\leq C(1+|\mu \lambda|)^{-(\kappa+2-s)}\mu^{\kappa+2-s},
\end{align}
and for every  $\ell\in\mathbb{N}^+$ and $M\geq 0$,
\begin{align}\label{lemma3.2}
  |\zeta_{\mu}^{(\ell)}(\lambda)|\leq
  \begin{cases}
    C_M(2^{\ell}\mu)^{\kappa+2-s}(1+|2^{\ell}\mu \lambda|)^{-M}, & \mathrm{if} \  2^{\ell}\mu\leq1; \\[6pt]
    C_M(2^{\ell}\mu)^{1-s}(1+|2^{\ell}\mu \lambda|)^{-M}, & \mathrm{if}\  2^{\ell}\mu\geq1.
  \end{cases}
\end{align}
Recall that $\widehat{\xi}$ and $\eta_0$ are even functions, we see that
\begin{align*}
  \zeta_{\mu}^{(0)}(\lambda)&=\int_{0}^{\infty}(1-\widehat{\xi}(a))\eta_0(\mu^{-1}a)a^{-s}(e^{ia\lambda}+e^{-ia\lambda})da\\
 &=\sum_{j\leq0}\int_{0}^{\infty}(1-\widehat{\xi}(a))\eta(2^{-j}\mu^{-1}a)a^{-s}(e^{ia\lambda}+e^{-ia\lambda})da\\
 &=\sum_{j\leq0}(2^j\mu)^{1-s}\int_{0}^{\infty}(1-\widehat{\xi}(2^j\mu a))\eta(a)a^{-s}\left(e^{i2^j\mu \lambda a}+e^{-i2^j\mu \lambda a}\right)da.
\end{align*}
 For any $\alpha\in\mathbb{N}$, from Taylor's expansion of $\widehat{\xi}$ at original point and the compact support of $\eta$, we have
 $ |\partial^{\alpha}\left((1-\widehat{\xi}(2^j\mu a))\eta(a)a^{-s}\right) |\leq C_{\alpha}\chi_{\{a:1/4\leq|a|\leq1\}}(a)(2^j\mu)^{\kappa+1}.$
Then for any $M\geq0$,
 $  |\zeta_{\mu}^{(0)}(\lambda)|\leq C_M\sum_{j\leq0}(2^j\mu)^{\kappa+2-s}(1+|2^j\mu\lambda|)^{-M}.$
From these estimates,  we   obtain that for any $0<\mu\leq1$
\begin{align*}
  |\zeta_{\mu}^{(0)}(\lambda)|\leq
    \begin{cases}
      C|\lambda|^{-(\kappa+2-s)}, &  |\lambda\mu|\geq1; \\[4pt]
      C\mu^{\kappa+2-s}, &  |\lambda\mu|\leq1,
    \end{cases}
  \end{align*}
which gives estimate~\eqref{ffae1}.

Now we consider the case  $\ell\geq1$. Recall that $\widehat{\xi}$ and $\eta_l$ are even functions. By integration by parts, we have
\begin{align*}
|\zeta_{\mu}^{(\ell)}(\lambda)|&=|\int_{\mathbb{R}}(1-\widehat{\xi}(a))\eta(2^{-\ell}\mu^{-1}a)|a|^{-s}e^{ia\lambda}da|\\
 &=|\int_{0}^{\infty}(1-\widehat{\xi}(a))\eta(2^{-\ell}\mu^{-1}a)a^{-s}(e^{ia\lambda}+e^{-ia\lambda})da|\\
 &\leq C|\lambda^{-\alpha}|\int_{0}^{\infty}|\partial^{\alpha}\left((1-\widehat{\xi}(a))\eta(2^{-\ell}\mu^{-1}a)a^{-s}\right)|da.
 \end{align*}
 If $2^{\ell}\mu\leq1$, for any $\alpha\in\mathbb{N}$, from supp~$\eta\subseteq\{\tau:1/4\leq|\tau|\leq1\}$,
 Taylor's expansion of $\widehat{\xi}$ at the original point and $\widehat{\xi}^{(0)}(0)=1$, $\widehat{\xi}^{(1)}(0)
 =\cdots=\widehat{\xi}^{(\kappa)}(0)=0$, we have
 $$|\partial^{\alpha}\left((1-\widehat{\xi}(a))\eta(2^{-\ell}\mu^{-1}a)a^{-s}\right)|
 \leq C_{\alpha}\chi_{\{a:2^{\ell-2}\mu\leq|a|\leq2^{\ell}\mu\}}(a)(2^{\ell}\mu)^{\kappa+1-s-\alpha}.$$
If $2^{\ell}\mu>1$, for any $\alpha\in\mathbb{N}$, from supp~$\eta\subseteq\{\tau:1/4\leq|\tau|\leq1\}$ and $\xi\in C_c^{\infty}(\mathbb{R})$, we have
$$|\partial^{\alpha}\left((1-\widehat{\xi}(a))\eta(2^{-\ell}\mu^{-1}a)a^{-s}\right)|\leq C_{\alpha}\chi_{\{a:2^{\ell-2}\mu\leq|a|\leq2^{\ell}\mu\}}(a)(2^{\ell}\mu)^{-s-\alpha}.$$
Consequently, for any $\ell\in\mathbb{N}^+$,  $M\in\mathbb{N}$ and any $0<\mu\leq1$,
\begin{align*}
|\zeta_{\mu}^{(\ell)}(\lambda)|\leq
\begin{cases}
  C_M(2^{\ell}\mu)^{\kappa+2-s}(1+|2^{\ell}\mu\lambda|)^{-M}, & \mbox{if }  2^{\ell}\mu\leq1;\\[6pt]
  C_M(2^{\ell}\mu)^{1-s}(1+|2^{\ell}\mu\lambda|)^{-M}, &  \mbox{if } 2^{\ell}\mu>1,
\end{cases}
\end{align*}
which gives estimate~\eqref{lemma3.2}.
\smallskip

Next we return to apply estimates~\eqref{ffae1} and \eqref{lemma3.2}  to prove estimate~\eqref{lemma4.3}.
Recall that $\Omega_{\mu,j}^{(\ell)}(x):=2^{-j}\zeta_{\mu}^{(\ell)}(2^{-j}x)$, which is an even function.
It suffices to consider  the positive odd integer $m$  since
 $\Omega_{\mu,j}^{(\ell),m}=0$ when $m$ is an even integer.
In the following, we assume that $\ell\in\mathbb{N}^+$  and consider two cases: $2^{\ell}\mu\leq1$ and $2^{\ell}\mu >1$.

\medskip

\noindent
{\bf{Case 1.}\ \   $2^{\ell}\mu\leq1$}.

In this case, it follows from estimate~\eqref{lemma3.2} that for $m\in\mathbb{N}^+$ and for every $M>1,$
 \begin{align*}
   \|\Omega_{\mu,j}^{(\ell),m}\|_2&=\left(\int_{m-1}^{m+1}|\Omega_{\mu,j}^{(\ell)}(x)|^2dx\right)^{1/2}
   =2^{-j}\left(\int_{m-1}^{m+1} |\zeta_{\mu}^{(\ell)}(2^{-j}x)|^2dx\right)^{1/2}\\
   &\leq C_{M}2^{-j}(2^{\ell}\mu)^{\kappa+2-s}\left(\int_{m-1}^{m+1} (1+2^{\ell-j}\mu x)^{-2M}dx\right)^{1/2}\\
   &\leq C_{M}2^{-j}(2^{\ell}\mu)^{\kappa+2-s}\left(1+2^{\ell-j}\mu (m-1)\right)^{-M}.
 \end{align*}
 Hence, for any $j\in\NN$ and $0<\mu\leq1$,
\begin{align*}
  \sum_{m\geq1}\|\Omega_{\mu,j}^{(\ell),m}\|_2\leq&\sum_{\substack{m:m\geq1,
  2^{\ell-j}\mu(m-1)\leq 1}}\|\Omega_{\mu,j}^{(\ell),m}\|_2+\sum_{\substack{m:m\geq1,2^{\ell-j}\mu(m-1)> 1}}\|\Omega_{\mu,j}^{(\ell),m}\|_2\\
  \leq&C_{M}2^{-j}(2^{\ell}\mu)^{\kappa+2-s}\Bigg(\sum_{\substack{m:m\geq1, \\ 2^{\ell-j}\mu(m-1)\leq 1}}1
  +\sum_{\substack{m:m\geq1,\\2^{\ell-j}\mu(m-1)> 1}}(2^{\ell-j}\mu (m-1))^{-M}\Bigg)\\
  \leq&  C2^{-j}(2^{\ell}\mu)^{\kappa+2-s}\left(\#\left\{m\in\mathbb{N}^+:1\leq m\leq 2^j(2^{\ell}\mu)^{-1}+1\right\}+2^j(2^{\ell}\mu)^{-1}\right)\\[4pt]
  \leq &C(2^{\ell}\mu)^{\kappa+1-s},
\end{align*}
where in the last inequality we use the fact   that when $ 2^{\ell}\mu \leq1$, $j\geq0$,
$$\#\{m\in\mathbb{N}^+:1\leq m\leq 2^j(2^{\ell}\mu)^{-1}+1\}\leq 2^{j+1}(2^{\ell}\mu)^{-1}.
$$

\medskip

\noindent
{\bf{Case 2.}\ \   $2^{\ell}\mu >1$}.

In this case, it follows from estimate~\eqref{lemma3.2}  that for $m=1$ and for every $M>1,$
 \begin{align*}
   \|\Omega_{\mu,j}^{(\ell),1}\|_2&=\left(\int_{0}^{2}|\Omega_{\mu,j}^{(\ell)}(x)|^2dx\right)^{1/2}
   =2^{-j}\left(\int_{0}^{2} |\zeta_{\mu}^{(\ell)}(2^{-j}x)|^2dx\right)^{1/2}\\
   &\leq C_{M}2^{-j}(2^{\ell}\mu)^{1-s}\left(\int_{0}^{2} (1+2^{\ell-j}\mu x)^{-2M}dx\right)^{1/2}\\
   &\leq C_{M}2^{-j}(2^{\ell}\mu)^{1-s}(2^{\ell-j}\mu)^{-1/2}\left(\int_{0}^{\infty}(1+y)^{-2M}dy\right)^{1/2}\\
   &\leq C2^{-j/2}(2^{\ell}\mu)^{1/2-s}.
 \end{align*}
If  $m\geq3$, then for any $M>1, $
 \begin{align*}
   \|\Omega_{\mu,j}^{(\ell),m}\|_2&=\left(\int_{m-1}^{m+1}|\Omega_{\mu,j}^{(\ell)}(x)|^2dx\right)^{1/2}
   =2^{-j}\left(\int_{m-1}^{m+1} |\zeta_{\mu}^{(\ell)}(2^{-j}x)|^2dx\right)^{1/2}\\
   &\leq C_{M}2^{-j}(2^{\ell}\mu)^{1-s}\left(\int_{m-1}^{m+1} (1+2^{\ell-j}\mu x)^{-2M}dx\right)^{1/2}\\
   &\leq C_{M}2^{-j}(2^{\ell}\mu)^{1-s}\left(1+2^{\ell-j}\mu (m-1)\right)^{-M}.
 \end{align*}
Therefore, for any $j\in\NN$ and $0<\mu\leq1$,
\begin{align*}
  \sum_{m\geq1}\|\Omega_{\mu,j}^{(\ell),m}\|_2\leq& \|\Omega_{\mu,j}^{(\ell),1}\|_2+\sum_{m:m\geq3,2^{\ell-j}\mu(m-1)\leq 1}
  \|\Omega_{\mu,j}^{(\ell),m}\|_2+\sum_{m:m\geq3,2^{\ell-j}\mu(m-1)> 1}\|\Omega_{\mu,j}^{(\ell),m}\|_2\\[4pt]
  \leq&C_{M}2^{-j}(2^{\ell}\mu)^{1-s}\Bigg(2^{\frac{j}2}(2^\ell\mu)^{-\frac12}+\Bigg(\sum_{\substack{m:m\geq3,\\2^{\ell-j}\mu(m-1)\leq 1}}1
  +\sum_{\substack{m:m\geq3,\\2^{\ell-j}\mu(m-1)> 1}}(2^{\ell-j}\mu (m-1))^{-M}\Bigg)\Bigg)\\
  \leq &C2^{-j}(2^{\ell}\mu)^{1-s}\left(2^{\frac{j}2}(2^\ell\mu)^{-\frac12}+\#\left\{m\in\mathbb{N}^+:3\leq m\leq 2^j(2^{\ell}\mu)^{-1}+1\right\}+2^j(2^{\ell}\mu)^{-1}\right)\\[6pt]
  \leq &C2^{-j}(2^{\ell}\mu)^{1-s}\Big(2^{\frac{j}2}(2^\ell\mu)^{-\frac12}+3\times2^j(2^{\ell}\mu)^{-1}\Big)\leq C(2^{\ell}\mu)^{\frac12-s},
\end{align*}
where  we use the inequality that
$$\#\{m\in\mathbb{N}^+:3\leq m\leq 2^j(2^{\ell}\mu)^{-1}+1\}\leq \max\{0,2^j(2^{\ell}\mu)^{-1}-1\}\leq 2^{j+1}(2^{\ell}\mu)^{-1} .$$

The proof of $\ell=0$ is similar to that of {\bf{Case 1}}   and even more simple,  and we skip the detail here. Combining the above discussion, we obtain estimate~\eqref{lemma4.3}.
This finishes the proof of Lemma~\ref{le3.5}, and then   the proof of Lemma~\ref{le3.4} is complete.
\end{proof}

\medskip

\section{Extensions}
\setcounter{equation}{0}
In the previous section, we proved   Theorem~\ref{Thm1} with the potential $V=|x|^2.$
However, the form of this potential does not play a crucial role in the proof.
Here we consider instead the operators
  $ H_V= - \Delta + V$
on $L^2(\RR^n)$ for $n\geq2$, where $V$ is  a  positive potential with the following conditions:
\begin{equation}\label{e4.1}
V \sim |x|^2, \quad |\nabla V| \sim |x|, \quad |\partial_x^2 V| \le 1.
\end{equation}
Under the assumption \eqref{e4.1}, $ H_V$ is a nonnegative self-adjoint operator on $L^2(\mathbb{R}^n)$. Such an operator admits a spectral
resolution
\begin{eqnarray*}
H_V=\int_0^{\infty} \lambda dE_{H_V}(\lambda).
\end{eqnarray*}
Now, the Bochner-Riesz means of order $\delta\geq 0$ for operator $ H_V$  can be defined by
 \begin{equation}\label{e4.2}
 S^{\delta}_R(H_V) f :=  \int_0^{R^2} \left(1-\frac{\lambda}{R^2}\right)^{\delta}dE_{H_V}(\lambda) f, \ \ \ \ f\in L^2(\mathbb{R}^n).
   \end{equation}
Then, similar to the Hermite operator $H$, the commutator $[b,S^{\delta}_R(H_V)]$ is  bounded on  $L^q(\mathbb{R}^n))$ for $p<q<p'$ uniformly in $R>0$ for $1\leq p\leq2n/(n+2)$ and $\delta>\delta(p)$, as we now show.

 \begin{theorem}\label{th4.1} \, Suppose the potential $V$ satisfies~\eqref{e4.1}.
 Let $n\geq2$, $1\leq p\leq2n/(n+2)$ and $\delta>\delta(p)$, then for all $b\in \mathrm{BMO(\mathbb{R}^n)}$
	$$\sup_{R>0}\big\| \big[b, 	S_R^{\delta}(H_V)\big ]  \big\|_{q\rightarrow q}\leq C\|b\|_{\mathrm{BMO}}$$
	for all $p<q<p'$.
	\end{theorem}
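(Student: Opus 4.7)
The strategy is to follow the proof of Theorem~\ref{Thm1} line by line, since the argument in Sections~2 and 3 uses only four analytic facts about $H=-\Delta+|x|^2$: (i) a Gaussian upper bound on the heat kernel (yielding the finite speed of propagation property, hence Lemma~\ref{le2.3}); (ii) the Sobolev-type estimate (Lemma~\ref{Lem2}) $\|(I+H)^{-\frac{n}{2}(\frac{1}{r}-\frac{1}{2})-\varepsilon}\|_{2\to r}\le C_\varepsilon$; (iii) the discrete restriction estimate (Lemma~\ref{le2.2}) $\|\chi_{[k,k+1)}(H)\|_{p\to 2}\le Ck^{(\delta(p)-1/2)/2}$; and (iv) the spectral lower bound $H\ge I$. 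Thus it suffices to verify that each of these four ingredients carries over to $H_V$ under the assumptions \eqref{e4.1}, and then repeat the argument of Section~3 verbatim, with $H$ replaced by $H_V$ throughout.

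For (i), since $V\ge 0$, the Feynman--Kac formula (or the Trotter product formula) gives pointwise domination of the heat kernel of $e^{-tH_V}$ by the free Gaussian kernel, so $H_V$ satisfies \eqref{FS} and an analogue of Lemma~\ref{le2.3} holds. For (iv), the condition $V\sim|x|^2$ implies via a standard min-max / quadratic form comparison with the harmonic oscillator that the bottom of the spectrum of $H_V$ is bounded below by a positive constant; rescaling $H_V\mapsto c^{-1}H_V$ if necessary, we may assume $H_V\ge I$, which justifies the reduction to $0<t\le 1$ in estimate~\eqref{ee3.1}. For (ii), the Sobolev estimate follows from the abstract framework of \cite{DOS} once the Gaussian heat kernel bound is available, together with the extension to $1\le r\le 2$ in \cite{CLSY}; neither step uses the specific form of the potential.

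The main obstacle is (iii), namely the discrete restriction-type estimate
\begin{equation*}
\|\chi_{[k,k+1)}(H_V)\|_{p\to 2}\le Ck^{(\delta(p)-1/2)/2},\qquad 1\le p\le \tfrac{2n}{n+2}.
\end{equation*}
Under the hypotheses \eqref{e4.1}, the spectrum of $H_V$ is discrete with Weyl asymptotics $N(\lambda)\sim c\lambda^n$, and such a restriction estimate is obtained by a WKB-type analysis of the spectral projection kernels along the lines of Koch and Tataru~\cite{KoT}; the key point is that the conditions $V\sim|x|^2$, $|\nabla V|\sim|x|$, $|\partial^2 V|\le 1$ provide precisely the symbolic regularity needed to run Karadzhov's argument for the harmonic oscillator in this more general setting.

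With (i)--(iv) in hand, the entire machinery of Section~3 applies without modification: one proves an analogue of Theorem~\ref{Thm2} for $H_V$ by decomposing $F(t\sqrt{H_V})=(\xi_N*\delta_tF)(\sqrt{H_V})+(\delta_tF-\xi_N*\delta_tF)(\sqrt{H_V})$ and establishing the analogues of Lemmas~\ref{le3.3}, \ref{le3.4}, and \ref{le3.5}. Each of these invokes only finite speed of propagation, the discrete restriction bound, the Sobolev estimate, and the lower spectral bound; no explicit use is made of the Hermite expansion \eqref{e1.2} or of the identity $H\Phi_\mu=(2|\mu|+n)\Phi_\mu$ beyond the fact that the eigenvalues lie in $[1,\infty)$. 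Specializing to $F(\lambda)=(1-\lambda^2)^\delta_+$ and $t=R^{-1}$ then yields Theorem~\ref{th4.1}.
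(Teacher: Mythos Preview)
Your proposal is correct and takes essentially the same approach as the paper: the paper's own proof simply cites \cite[Theorem~4]{KoT} for the spectral projection estimate \eqref{e4.4} under hypotheses \eqref{e4.1} and then states that the argument of Theorem~\ref{Thm2} carries over. Your enumeration of the four analytic ingredients (Gaussian heat bound, Sobolev estimate, restriction estimate, spectral lower bound) is more explicit than the paper's one-line justification, but the content is the same.
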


\begin{proof}
It follows from  \cite[Theorem~4]{KoT} that  for all  $1\leq p\leq 2n/(n+2)$ and $\lambda \geq 0$
\begin{eqnarray}\label{e4.4}
\|E_{H_V}[\lambda^2,\, \lambda^2+1)\|_{p\rightarrow 2} \leq C(1+\lambda)^{n(\frac1p-\frac12)-1}.
\end{eqnarray}
With Lemma~\ref{le2.2} and the spectral projection  estimate \eqref{e4.4},  the argument   in the proof of Theorem~\ref{Thm2}
also establishes Theorem~\ref{th4.1}.
\end{proof}

\bigskip

 \noindent
{\bf Acknowledgments.}
P. Chen was supported by NNSF of China 12171489.
 L. Yan was supported by the NNSF of China
 11871480  and by the Australian Research Council (ARC) through the research
grant DP190100970.

%
%
%
 \noindent
{\bf Declarations}

 \noindent
{\bf Conflict of interest} The authors declare that they have no conflict of interest regarding the work reported in this paper.

\bigskip

\end{document}